\numberwithin{equation}{section}
\theoremstyle{plain}
\newtheorem{theorem}{Theorem}[section]
\newtheorem{lemma}[theorem]{Lemma}
\newtheorem{corollary}[theorem]{Corollary}
\theoremstyle{definition}
\theoremstyle{remark}
\title{Anisotropic mean curvature flow with contact angle and Neumann boundary conditions in arbitrary dimensions}
\date{}
\begin{document}
\author{Can Cui, Nung Kwan Yip \\
Department of Mathematics, Purdue University, West Lafayette, 47907 \\
cui147@purdue.edu, yipn@purdue.edu
}

\maketitle

\begin{abstract}
Over a bounded strictly convex domain in $\mathbb{R}^n$ with smooth boundary, we establish a priori gradient estimate for an anisotropic mean curvature flow with prescribed contact angle and Neumann boundary conditions. The estimates require careful analysis of the degeneracy property of the anisotropic mean curvature
operator. As a result, for both problems, we can infer that the solutions converge to one that is translation invariant in time. \\

\noindent
\textbf{Keywords}: Anisotropic mean curvature flow, contact angle, Neumann problem, gradient estimate, asymptotic behavior.
\end{abstract}

\section{Introduction} \label{sec:1}
Mean curvature flow is a well known evolution equation for a hypersurface $M(t) \subset \mathbb{R}^{n+1}$ in which each point $X(t)\in M(t)$ moves with a velocity given by the mean curvature vector $\Vec{H}$:
\begin{equation} \label{eq:para_mcf}
\frac{\partial X}{\partial t}=\Vec{H}(X,t).
\end{equation} 
The concept of mean curvature is used in a large variety of applications, from modeling the behavior of  interfaces in materials,  formation of microstructures in material science to image processing and computer graphics. See \cite{Mu}, \cite{Ta}, \cite{AGLM} and \cite{desbrun1999implicit} for some expositions of these applications.

In this paper, we concentrate on the case when $M(t)$ is given by the graph of a function $u$ over a domain $\Omega\subseteq\mathbb{R}^{n}$: $M(t) = \{(x,u(x,t)): x\in\Omega\}$.
Then \eqref{eq:para_mcf} can be represented by the following
non-parametric form:
\begin{equation}\label{eq:graph_mcf}
     \frac{\partial u}{\partial t} = \left( \delta_{ij} -\frac{u_{x_i} u_{x_j}}{1+|Du|^2} \right) \frac{\partial^2 u}{\partial x_i \partial x_j}.
\end{equation}
(In the above, $1\leq i, j\leq n$, and we have used the Einstein's convention of summation over repeated indices.)
This equation also arises as the $L^2$-gradient flow for the surface area functional:
\begin{equation*}
    E(u) = \int \sqrt{1+|Du|^2} dx,
\end{equation*}
in the sense that $\frac{\partial u}{\partial t} = -\nabla_{L^2} E(u)$.

Over the years, various results have been obtained for the mean curvature flow
\eqref{eq:para_mcf} and \eqref{eq:graph_mcf} -- see for example \cites{EH1, Ecker2004}. 
In this paper, we focus on some boundary value problems associated with \eqref{eq:graph_mcf}. In particular, we analyze the contact angle and Neumann boundary conditions over a convex domain $\Omega$. 

The contact angle $\theta$ is the angle between the tangent plane of the function $u$ and the vertical plane over the domain boundary, $\partial\Omega$. Physically, if the graph of $u$ represents the surface of a liquid region and the solid phase is bounded by a vertical wall along $\partial\Omega$, then $\theta$ is the angle between the liquid and solid phases at their interface.
The contact angle thus describes how a liquid interacts with a solid surface. If $\theta < \pi/2$, then the liquid spreads out on the surface (wetting) while if $\theta > \pi/2$, then the liquid tends to form a droplet (non-wetting) - see Figure \ref{fig:1}. This is a result of the balance between the adhesive  (between the liquid and solid) and cohesive (within the liquid) forces. For more details on the physical background, see \cites{de2003capillarity, finn1986equilibrium}.
\begin{figure}[h] \label{fig:1}
\centering
\begin{minipage}{0.45\linewidth}
\includegraphics[width=1\linewidth]{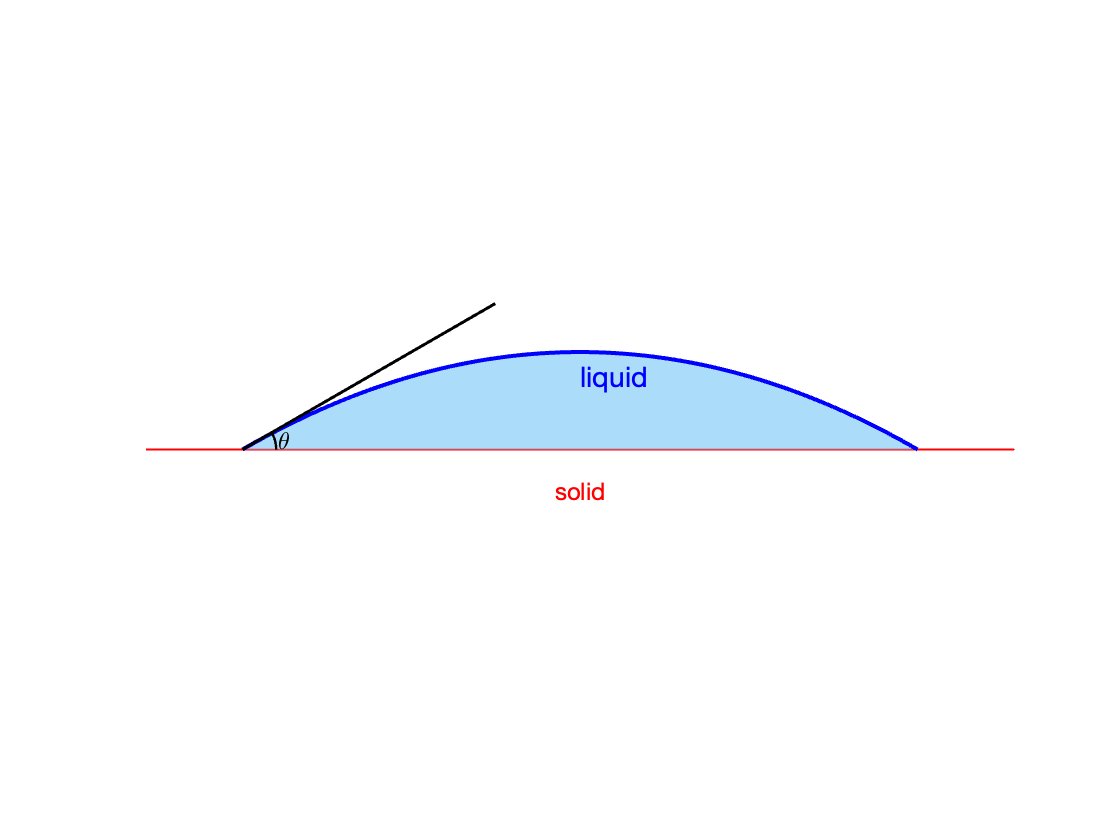}\\
\end{minipage}\hspace{20pt}
\begin{minipage}{0.5\linewidth}
\includegraphics[width=0.8\linewidth]{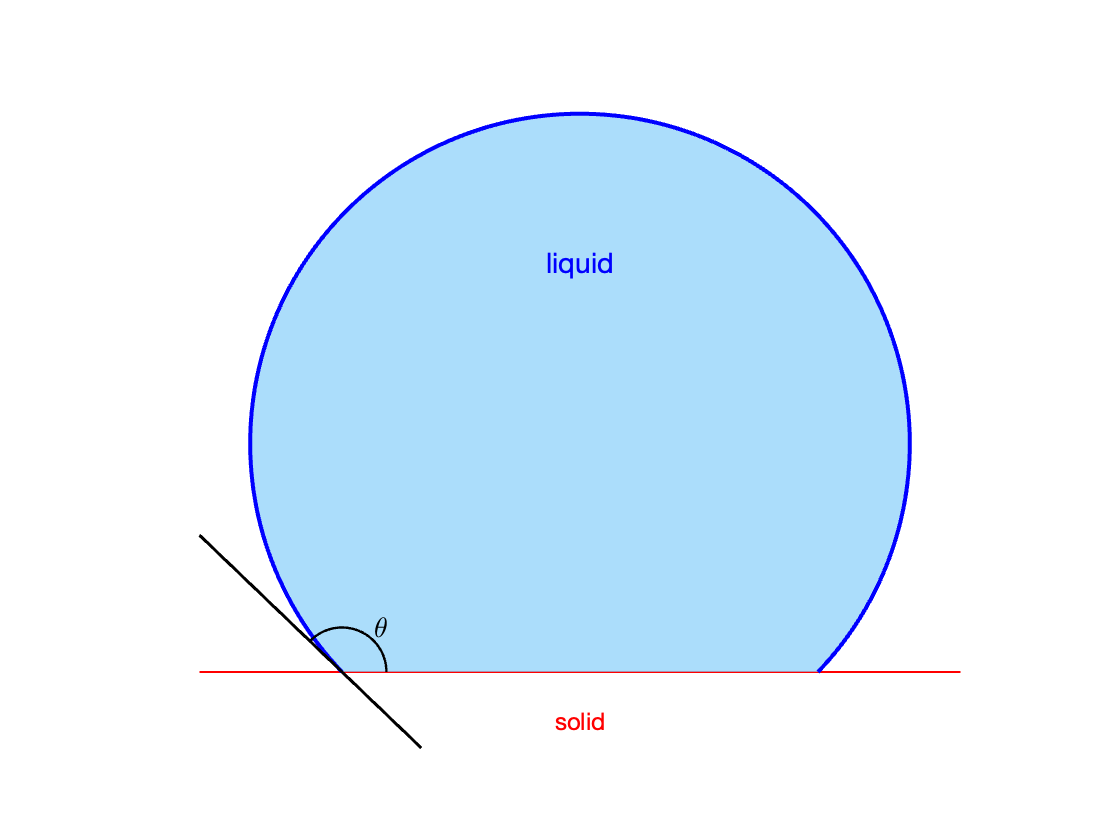}\\
\end{minipage}\hfill
\caption{  Left: $\theta<\pi/2$, liquid spreads out, Right: $\theta>\pi/2$, droplet.}
\end{figure}

The above description motivates the following contact angle boundary condition:
\begin{equation} \label{eq:hu_brypro}
D_N u = -  \cos \theta \sqrt{1+|Du|^2}  \quad   \text{on} \ \partial \Omega.    
\end{equation}
where $N$ is the inward unit normal to $\partial\Omega$.
In \cite{Hu1}, Huisken considered the above problem
with $\theta$ fixed to be $\frac{\pi}{2}$ essentially corresponding to a Neumann boundary condition. He proved that the solution of \eqref{eq:graph_mcf}-\eqref{eq:hu_brypro} converges to a constant function over $\Omega$. In \cite{AW1}, Altschuler and Wu considered \eqref{eq:graph_mcf}-\eqref{eq:hu_brypro}
when $\Omega$ has dimension one. They proved that if $\theta$ takes on a value
between $0$ and $\pi$, then the solution converges to a solution which translates vertically in time. This result also works for a class of quasilinear equations. Later, in \cite{AW2}, they proved the same convergence result over a domain in $\mathbb{R}^{2}$ with $\theta$ not necessarily a constant. However, their $\theta$ can not vary too fast along the domain boundary. In recent years, the work \cite{MWW} studied mean curvature flow with Neumann boundary condition and prove that solutions converge to ones moving by vertical translation. After that, \cite{GMWW} generalized Altschuler and Wu's result to a domain in higher dimensions, but $\theta$ is required to be sufficiently closed to $\frac{\pi}{2}$. 

The goal of this paper is to generalize the above results to anisotropic version of
\eqref{eq:hu_brypro}.  Similar to the isotropic case, it arises as the
gradient flows for the following area-like functional:
\begin{equation} \label{eq:ani_funcl}
    E(u) = \int F(-\nu)\sqrt{1+|Du|^2} dx,
\end{equation}
where $\nu = (-Du,1)/\sqrt{1+|Du|^2}$ is the upward normal to the graph, and $F(\cdot): \mathbb{R}^{n+1}\longrightarrow\mathbb{R}_+$ is a positive, convex and homogeneous function of degree one, in the sense that $F(\lambda p) = \lambda F(p)$ for all $\lambda \geq 0$ and $p\in\mathbb{R}^{n+1}$. Similar to the isotropic case (where $F(p)=|p|$), the non-parametric (graphical) anisotropic mean curvature flow is derived as:
\begin{equation}  \label{eq:nonpara_aniso}
    \frac{\partial u}{\partial t} = \sqrt{1+|Du|^2} \ D^2_{p_i p_j} F (Du,-1) \ \frac{\partial^2 u}{\partial x_i \partial x_j}.
\end{equation}
In the above, the derivatives $D_{p_ip_j}$ are taken with respect to the first $n$ components, $1\leq i, j \leq n$. In the rest of the paper, we write
\begin{equation}\label{eq:def_a}
    a^{ij} := \sqrt{1+|Du|^2} \ D^2_{p_i p_j} F (Du,-1).
\end{equation}
We mention here some previous work on anisotropic mean curvature flow \cite{An1} and \cite{Cl}. These work do not consider boundary conditions. The recent work \cite{CKN} also considers anisotropic flows but on the entire domain $\mathbb{R}^n$ and with a specific form of the anisotropy.

Anisotropic mean curvature flow has several applications where directional dependence plays a crucial role, distinguishing it from the isotropic case. In crystalline materials, the surface energy often varies by direction due to atomic arrangement. Hence it can accurately capture the evolution of crystal surfaces by accounting for direction-dependent surface energy -- see \cite{cahn1993equilibrium} and \cite{taylor1993motion}.
In image processing, as in \cite{alvarez1992image} and \cite{sochen1998general}, anisotropic flow enables edge-preserving filtering by smoothing noise in specific directions without blurring important edges. 
In fluid dynamics, anisotropic flow models fluid motions in porous or fibrous materials where permeability varies by direction. See \cite{garcke1998anisotropic} as a reference.

Now we state precisely the questions analyzed in this paper. Consider the following two boundary value problems over a bounded and strictly convex domain $\Omega \subset \mathbb{R}^{n}$:
 \begin{enumerate}
\item Prescribed contact angle boundary problem:
\begin{equation} \label{eq:angpro}
    \left\{ \begin{array}{cllcl}
            u_t & = & \sqrt{1+|Du|^2} \  D^2_{p_i p_j} F (Du,-1) u_{x_i x_j}   \quad &  \text{in} & \ Q_T = \Omega \times [0,T], \\
            D_N u & = & -  \cos \theta \sqrt{1+|Du|^2}  \quad &  \text{on} & \ \Gamma_T=\partial \Omega \times [0,T],  \\
         u(\cdot,0) & = & u_0(\cdot)  \quad \ & \text{in} & \ \Omega_0 = \Omega \times\{0\}.
    \end{array}  \right.
\end{equation}

\item Neumann boundary condition:
\begin{equation} \label{eq:neupro}
    \left\{ \begin{array}{cllcl}
            u_t & = & \sqrt{1+|Du|^2} \  D^2_{p_i p_j} F (Du,-1) u_{x_i x_j}   \quad &  \text{in} & \ Q_T, \\
            D_N u & = & \varphi  \quad &  \text{on} & \ \Gamma_T,  \\
         u(\cdot,0) & = & u_0(\cdot)  \quad \ & \text{in} & \  \Omega_0.
    \end{array}  \right.
\end{equation}
This can be viewed as a linearized version -- when $Du$ is small -- of the first contact angle problem.
\end{enumerate}
 
The main results are a priori gradient estimates for the above boundary value problems. 
These in turn prove the long time behavior of the solutions -- they converge to a 
solution that is translation invariant in time. They thus generalize earlier similar results
in the anisotropic case. Our main assumption is that the anisotropic function $F$ does not deviate too much from the isotropic function $\overline{F}(p)=|p|$. See Section \ref{sec:2} for the precise statements on $F$.

In our parallel work \cite{CY1}, we solve \eqref{eq:angpro} for a domain $\Omega \subseteq\mathbb{R}^2$.
To derive a boundary gradient bound in that case, we differentiated the left-hand side of the governing equation along the tangential and normal directions at a point on $\partial\Omega$. When $n=2$, we could determine the second-order tangential derivative of $u$ by differentiating the boundary condition in \eqref{eq:angpro}. By combining this calculation with maximum principle, we established a gradient bound of $u$.
However, when $n>2$, mixed second-order tangential derivatives arise, and they cannot be resolved by the same method. This issue also occurs when solving \eqref{eq:neupro}. To address this, we would need a new approach to derive the gradient estimate in higher dimensions. See Theorem \ref{thm:3.1} and Theorem \ref{thm:4.1} for our main results. 

It turns out that the same techniques can also be applied to prove the gradient estimate of the solutions to the following elliptic version \eqref{eq:angpro} and \eqref{eq:neupro},
\begin{equation} \label{eq:elang_pro}
\left\{ \begin{array}{cllcl}
     \lambda & = & \sqrt{1+|Dw|^2} \ D^2_{p_i p_j} F (Dw,-1) w_{x_i x_j}    
     \quad & \text{in} & \ \Omega,  \\
     D_N w & = & - \sqrt{1+|Dw|^2} \cos \theta \quad & \text{on} & \ \partial \Omega ,
\end{array} \right.
\end{equation}
and 
\begin{equation} \label{eq:elneu_pro}
    \left\{ \begin{array}{cllcl}
           \lambda  & = & \sqrt{1+|Dw|^2} \  D^2_{p_i p_j} F (Dw,-1) w_{x_i x_j}    \quad &  \text{in} &  \ \Omega , \\
            D_N w & = & \varphi  \quad &  \text{on} & \ \partial \Omega ,
    \end{array}  \right..
\end{equation}
Once these are established, by following a similar process as in \cite{AW2}, \cite{GMWW}, and \cite{MWW}, we obtain the asymptotic behavior of solutions to \eqref{eq:angpro} and \eqref{eq:neupro} which shows that
any solution converges to a vertically translating solution.

\section{Preliminary} \label{sec:2}
In this section, we provide some useful properties of the anisotropic function $F$.

\subsection{Estimates of homogeneous function} \label{sec:2.1}
In this section, we focus on properties of homogeneous functions on $\mathbb{R}^{n+1}$. 
Recall that our anisotropic function $F$ is assumed to be positive, convex and homogeneous of degree one. For the purpose of distinction, we will use $\overline{F}$ to denote the isotropic function 
$\overline{F}(p) = |p|$.

Throughout this section, the notation $\Big|_{p}$ means evaluation of functions at $p$.
For example, $F\Big|_{p}=F(p)$.
We will also decompose a vector $p$ in $\mathbb{R}^{n+1}$ as $p = (p' , p^{n+1} )$ where
$p'\in\mathbb{R}^n$ and $p^{n+1}\in\mathbb{R}$. From the homogeneous property of the function $F$, we have
\begin{equation} \label{hom_org}
         \ F\Big|_{p} = DF\Big|_{p} (p),
\end{equation}
where $D$ refers to taking derivative in a function's argument. Hence $DF\Big|_{p}(\cdot)$ is a linear form on $\mathbb{R}^{n+1}$. Similarly $D^2 F\Big|_{p}(\cdot,\cdot)$ and $D^3 F\Big|_{p}(\cdot,\cdot,\cdot)$ are respectively a bilinear form and a 3-way tensor on $\mathbb{R}^{n+1}$. We remark that $DF\Big|_{p}$ is an $(n+1)$-dimensional vector and $D^2 F\Big|_{p}$ is an $(n+1)\times(n+1)$ matrix. Referring to the notation in equation \eqref{eq:nonpara_aniso}, we use $\big[D^2_{p_i p_j} F\big]$ to represent the first $n\times n$ submatrix of $D^2F$.

For the rest of this section, we will let $p=(p',-1)$ be a fixed vector in $\mathbb{R}^{n+1}$. Due to the homogeneity of $F$, similar to \eqref{hom_org}, we have
\begin{equation} \label{homc2}
    D^2 F\Big|_p (p,\cdot) = D^2 F\Big|_p (\cdot,p) = 0,
\end{equation}
\begin{equation} \label{homc3}
    D^3 F\Big|_p (p,\cdot,\cdot) = -D^2 F\Big|_p (\cdot,\cdot).
\end{equation}

Next, we will state some properties of $D^2 F\Big|_p$ and $D^3 F\Big|_p$. For convenience,  we introduce some notations that will be used frequently later. Firstly, 
we choose an orthonormal basis of $\mathbb{R}^n$: 
\begin{equation}\label{ONB}
\{\phi^1, \cdots, \phi^n\}\,\,\,\text{with $\phi^n = \frac{p'}{|p'|}$ for $p'\neq0$.}
\end{equation}
Then we define
\begin{equation*}
    \tau^{\alpha \beta } := |p| \  D^2 F\Big|_{(p',-1)}(\phi^{\alpha}, \phi^{\beta}) \quad \text{for}\,\,\,\alpha,\beta = 1,\cdots,n.
\end{equation*}
Due to the homogeneity of $F$, we have $\tau^{\alpha \beta } \sim O(1)$. 
Similarly, for $D^3 F \Big|_p $, the following holds:
\begin{equation}\label{D3F.bd}
    D^3 F\Big|_{(p',-1)}(\phi^{\alpha}, \phi^{\beta}, \phi^{\gamma}) \sim O \left( \frac{1}{|p|^2}  \right)  \quad \alpha,\beta,\gamma = 1,\cdots,n.
\end{equation}
(In the above and for what follows, we use the convention that $a\sim O(b)$ means that there
exists a constant $C$ such that $|a| \leq C|b|$.)
However, the above generic property is not sufficient for our purpose. 
We need to have more elaborate estimates for the degeneracy of the anisotropic function $F$. This will be carried in the rest of this section.

Our mathematical approach requires us to understand the degeneracy of $D^2 F$. This is motivated by the
corresponding properties of the isotropic function $\overline{F}$ as seen in the following,
\begin{equation}
    |p|D^2 \overline{F} \Big|_{(p',-1)} (\phi^n,\phi^n) = \frac{1}{|p|^2} \sim O \left( \frac{1}{|p|^2}  \right) ,
\end{equation}
which is not of order $O(1)$ for $|p|\gg 1$ -- this is the main reason our operator is not uniformly elliptic. Similar results can also be stated for $D^3 \overline{F} \Big|_{(p',-1)}$.

Throughout our paper, we will assume the following symmetry condition for $F$:
\begin{equation}  \label{symc0}
    F\Big|_{(p',-1)} = F\Big|_{(p',1)}.
\end{equation}
This assumption also appears in \cite{Cl}.
The following properties are derived directly from \eqref{symc0}:
for any $q', r' \in \mathbb{R}^n $, we have,
\begin{equation} \label{symc1}
    DF\Big|_{(p',0)} (0,1) = 0,
\end{equation}
\begin{equation} \label{symc2}
    D^2 F\Big|_{(p',0)} ((0,1), (q',0)) = 0 ,
\end{equation}
\begin{equation} \label{symc3}
    D^3 F\Big|_{(p',0)} ((0,1), (q',0), (r',0)) =0,
\end{equation}
\begin{equation} \label{symc4}
    D^3 F\Big|_{(p',0)} ((0,1), (0,1), (0,1)) =0.
\end{equation}

With the above,  we have the following lemmas that quantify various degeneracy properties of $D^2 F$ and $D^3F$.
\begin{lemma} \label{lem:2.1}
    There exists a positive constant $C_1$ such that for any $q'\in \mathbb{R}^n$ with $|q'|=1$, we have
\begin{equation}\label{eq:lem2.1}
    \left|D^2 F\Big|_{(p',-1)} ((p',0), (q',0))\right| \leq \frac{C_1}{|p'|^2}.
\end{equation}
\end{lemma}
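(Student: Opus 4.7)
My plan is to combine the homogeneity of $F$ with the symmetry condition \eqref{symc0} to extract an extra factor of $1/|p'|$ beyond what pure homogeneity provides. Indeed, the naive bound from homogeneity of $D^2F$ alone (degree $-1$) only yields $O(1/|p'|)$ on unit inputs, so the symmetry hypothesis is the essential ingredient.

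First, I would decompose $(p',0) = (p',-1) + (0,1)$ and set $p = (p',-1)$. Using \eqref{homc2} to annihilate the $p$ piece in the first slot of the bilinear form, the target reduces to
\[
    D^2 F\Big|_{(p',-1)}((p',0),(q',0)) = D^2 F\Big|_{(p',-1)}((0,1),(q',0)).
\]
Second, I observe that the companion expression $D^2 F\big|_{(p',0)}((0,1),(q',0))$ vanishes identically by \eqref{symc2}. I would then connect the two base points $(p',0)$ and $(p',-1)$ along the segment $\{(p',-s):s\in[0,1]\}$ and apply the fundamental theorem of calculus to $g(s) := D^2 F\big|_{(p',-s)}((0,1),(q',0))$, which produces
\[
    D^2 F\Big|_{(p',-1)}((0,1),(q',0)) = -\int_0^1 D^3 F\Big|_{(p',-s)}((0,1),(0,1),(q',0)) \, ds.
\]

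Third, I would use the homogeneity of $D^3 F$ (degree $-2$) together with continuity on the unit sphere to bound the integrand uniformly by $C/|(p',-s)|^2 \leq C/|p'|^2$ when $|p'| \geq 1$, since $(0,1)$ and $(q',0)$ are unit vectors. This yields the claim for large $|p'|$; the range $|p'| \leq 1$ is handled separately using the smoothness of $F$ near $(0,-1)$ after enlarging $C_1$ if necessary.

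The main subtlety lies in the second step: one must recognize that the hyperplane $\{p^{n+1} = 0\}$ furnishes a vanishing reference, so that the decay of $D^3 F$ (rather than the weaker decay of $D^2 F$) governs the estimate, upgrading the rate from $1/|p'|$ to $1/|p'|^2$. I expect a very similar integration-along-the-$(n+1)$-coordinate trick to reappear when proving the analogous degeneracy estimates for $D^3 F$ later on.
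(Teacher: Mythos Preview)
Your proof is correct and follows essentially the same approach as the paper: both reduce to $D^2 F\big|_{(p',-1)}((0,1),(q',0))$ via \eqref{homc2}, subtract the vanishing value at base point $(p',0)$ from \eqref{symc2}, and then control the discrepancy through $D^3F$. The paper packages this last step as a scaling/difference-quotient limit (showing $|sp'|^2 D^2F|_{(sp',-1)}((sp',0),(q',0))$ has a finite limit as $s\to\infty$ and is hence bounded), whereas you write it as an explicit integral of $D^3F$ along the segment; these are equivalent realizations of the same idea, and in fact your integral bound $|(p',-s)|\ge |p'|$ holds for all $p'\neq 0$, so the separate treatment of $|p'|\le 1$ is not even needed.
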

\begin{proof}
In the following we introduce a variable $s>0$ and $s'=\frac{1}{s}$ so that $s'\longrightarrow0$ as $s\longrightarrow\infty$.
Using equation \eqref{symc2}, we have: when $s\rightarrow \infty$, $s'=1/s\rightarrow 0$,
\begin{equation*}
\begin{split}
    & \ \ \ \  D^2 F\Big|_{(sp',-1)} ((sp',0), (q',0))  = D^2 F\Big|_{(\frac{p'}{s'},-1)} ((\frac{p'}{s'},0), (q',0)) \\
    & = - D^2 F\Big|_{(\frac{p'}{s'},-1)} ((0,-1), (q',0) ) = D^2 F\Big|_{(\frac{p'}{s'},-1)} ((0,1), (q',0) ) \\
    & = s' D^2 F\Big|_{(p',-s')} ((0,1), (q',0) ) \\
    & = s' \left( D^2 F\Big|_{(p',-s')} ((0,1), (q',0) ) - D^2 F\Big|_{(p',0)} ((0,1),(q',0))   \right) \\
    & = s'^2 \frac{D^2 F\Big|_{(p',-s')} ((0,1), (q',0) ) - D^2 F\Big|_{(p',0)} ((0,1),(q',0))}{s'}, 
\end{split}
\end{equation*}
so that 
\[
|sp'|^2D^2 F\Big|_{(sp',-1)} ((sp',0), (q',0))
=
|p'|^2 \frac{D^2 F\Big|_{(p',-s')} ((0,1), (q',0) ) - D^2 F\Big|_{(p',0)} ((0,1),(q',0))}{s'}
\]
which implies that
\begin{equation*}
    \lim_{s\rightarrow \infty} |sp'|^2 D^2 F\Big|_{(sp',-1)} ((sp',0), (q',0)) = |p'|^2 D^3 F\Big|_{(p',0)} ((0,-1), (0,1),(q',0)). 
\end{equation*}
Note that by \eqref{D3F.bd}, the right hand side of the above is bounded. Hence
\begin{equation*}
    \sup_{s\in[0,+\infty)} |sp'|^2 D^2 F\Big|_{(sp',-1)} ((sp',0), (q',0)) <
    \infty,
\end{equation*}
and \eqref{eq:lem2.1} follows by setting $s=1$ in the above.
\end{proof}

\begin{lemma} \label{lem:2.2}
There exists a positive constant $C_2$ such that for any $q',r'\in \mathbb{R}^n$ with $|q'|=|r'|=1$, we have
\begin{equation}
  \left|  D^3 F\Big|_{(p',-1)} ((0,1),(q',0),(r',0) ) \right| \leq \frac{C_2}{|p'|^3}.
\end{equation}
\end{lemma}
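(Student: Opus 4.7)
The plan is to mirror the argument of Lemma \ref{lem:2.1}, but to extract one additional order of smallness by exploiting the cubic symmetry identity \eqref{symc3} in place of the quadratic identity \eqref{symc2}. Specifically, I would introduce a scaling parameter $s > 0$ with reciprocal $s' = 1/s$ and use the fact that $D^3 F$ is homogeneous of degree $-2$ (since $F$ is homogeneous of degree one) to rewrite
\begin{equation*}
D^3 F\Big|_{(sp',-1)}\big((0,1),(q',0),(r',0)\big) = s'^2\, D^3 F\Big|_{(p',-s')}\big((0,1),(q',0),(r',0)\big).
\end{equation*}
Since \eqref{symc3} gives $D^3 F\Big|_{(p',0)}((0,1),(q',0),(r',0)) = 0$, the right-hand side can be recognized as $s'^3$ times a difference quotient in the $(n{+}1)$-st coordinate of the base point, i.e.\ an approximation to a directional derivative there.

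Multiplying through by $|sp'|^3 = |p'|^3/s'^3$ and sending $s \to \infty$ (equivalently $s' \to 0$), the resulting expression converges to $-|p'|^3\, D^4 F\Big|_{(p',0)}\big((0,1),(0,1),(q',0),(r',0)\big)$. This limit is uniformly bounded in $p'$, $q'$, $r'$ because $D^4 F$ is homogeneous of degree $-3$ and is evaluated on unit vectors, so the prefactor $|p'|^3$ cancels exactly the decay forced by homogeneity. Combined with the observation that the function $g(s) := |sp'|^3\, D^3 F\Big|_{(sp',-1)}\big((0,1),(q',0),(r',0)\big)$ is continuous on $[0,\infty)$ with $g(0) = 0$, this yields a uniform bound on $g$ over $[0,\infty)$. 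Evaluating at $s = 1$ then gives exactly the desired estimate $\left|D^3 F\Big|_{(p',-1)}((0,1),(q',0),(r',0))\right| \leq C_2/|p'|^3$.

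The main obstacle I anticipate is not algebraic but a matter of regularity: the argument requires $F \in C^4$ away from the origin in order for $D^4 F\Big|_{(p',0)}$ to be defined and to satisfy the homogeneity-based pointwise bound. This is slightly stronger than the regularity explicitly needed elsewhere in the preliminaries, and it should be acknowledged as a standing hypothesis when writing up the full proof. A minor secondary check is the compactness/continuity step that promotes the finite pointwise limit into a uniform bound on $g$ independent of $p'$, $q'$, $r'$, but this follows routinely from the homogeneity scaling once the $C^4$ assumption is in place.
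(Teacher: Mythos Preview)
Your proposal is correct and follows essentially the same argument as the paper's proof: scale the base point by $s$, use the degree $-2$ homogeneity of $D^3F$ to pull out $s'^2$, subtract the vanishing term \eqref{symc3} to form a difference quotient, and pass to the limit to obtain a $D^4F$ expression whose homogeneity cancels the $|p'|^3$ prefactor. Your additional remarks on the needed $C^4$ regularity and on the uniformity in $p',q',r'$ are valid refinements that the paper leaves implicit.
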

\begin{proof}
Using equation \eqref{symc3}, we have,
\begin{equation*}
\begin{split}
   & \ \ \ \  D^3 F\Big|_{(sp',-1)} ((0,1),(q',0),(r',0) ) = D^3 F\Big|_{(\frac{p'}{s'},-1)} ((0,1),(q',0),(r',0) ) \\
   & = s'^2 D^3 F\Big|_{(p', -s')} ((0,1),(q',0),(r',0) ) \\
   & = s'^2 \left(  D^3 F\Big|_{(p', -s')} ((0,1),(q',0),(r',0) ) -  D^3 F\Big|_{(p', 0)} ((0,1),(q',0),(r',0) ) \right) \\
   & = s'^3 \frac{D^3 F\Big|_{(p', -s')} ((0,1),(q',0),(r',0) ) -  D^3 F\Big|_{(p', 0)} ((0,1),(q',0),(r',0) ) }{s'},
\end{split}
\end{equation*}
and hence 
\begin{equation*}
    \lim_{s\rightarrow \infty} |sp'|^3 D^3 F\Big|_{(sp',-1)} ((0,1),(q',0),(r',0) ) = |p'|^3 D^4 F\Big|_{(p', 0)} ((0,-1),(0,1),(q',0),(r',0)),
\end{equation*}
so that the right hand side of which is bounded, i.e. 
\begin{equation*}
    \sup_{s\in[0,+\infty)} |sp'|^3 D^3 F\Big|_{(sp',-1)} ((0,1),(q',0),(r',0) ) < \infty.
\end{equation*}
Setting $s=1$ gives the desired result.
\end{proof}

\begin{lemma} \label{lem:2.3}
There exists a positive constant $C_3$ such that 
\begin{equation}
  \left|  D^3 F\Big|_{(p',-1)} ((0,1),(0,1),(0,1) \right| \leq \frac{C_3}{|p'|^3}.
\end{equation}
\end{lemma}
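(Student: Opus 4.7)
The proof should follow the exact same template used for Lemmas \ref{lem:2.1} and \ref{lem:2.2}: exploit the symmetry condition \eqref{symc4}, which gives a base case at $s'=0$, and then recover the extra power of $|p'|^{-1}$ by performing a difference quotient in $s'$.

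Concretely, the plan is to introduce the scaling parameter $s>0$ and $s'=1/s$, and look at $D^3 F\big|_{(sp',-1)}\bigl((0,1),(0,1),(0,1)\bigr)$. Because $F$ is homogeneous of degree one, $D^3 F$ is homogeneous of degree $-2$, so
\begin{equation*}
D^3 F\Big|_{(sp',-1)}\bigl((0,1),(0,1),(0,1)\bigr)
= s'^2\, D^3 F\Big|_{(p',-s')}\bigl((0,1),(0,1),(0,1)\bigr).
\end{equation*}
At this point I would invoke \eqref{symc4}, which tells us that the value at $s'=0$ vanishes:
\begin{equation*}
D^3 F\Big|_{(p',0)}\bigl((0,1),(0,1),(0,1)\bigr)=0.
\end{equation*}
Subtracting this zero inside the expression above and recognizing a difference quotient in $s'$ gives
\begin{equation*}
D^3 F\Big|_{(sp',-1)}\bigl((0,1),(0,1),(0,1)\bigr)
= s'^{3}\cdot\frac{D^3 F\big|_{(p',-s')}\bigl((0,1),(0,1),(0,1)\bigr)-D^3 F\big|_{(p',0)}\bigl((0,1),(0,1),(0,1)\bigr)}{s'}.
\end{equation*}

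Letting $s\to\infty$ (so $s'\to 0$), the difference quotient converges to the fourth derivative
$D^4 F\big|_{(p',0)}\bigl((0,-1),(0,1),(0,1),(0,1)\bigr)$, which is finite by the smoothness of $F$ away from the origin. Multiplying both sides by $|sp'|^3=|p'|^3/s'^3$ and passing to the limit yields
\begin{equation*}
\lim_{s\to\infty}|sp'|^{3}\,D^3 F\Big|_{(sp',-1)}\bigl((0,1),(0,1),(0,1)\bigr)
=|p'|^{3}\,D^4 F\Big|_{(p',0)}\bigl((0,-1),(0,1),(0,1),(0,1)\bigr),
\end{equation*}
and hence the quantity $|sp'|^3 D^3 F\big|_{(sp',-1)}((0,1),(0,1),(0,1))$ is uniformly bounded in $s\in[0,\infty)$. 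Specializing to $s=1$ delivers the claimed estimate with a constant $C_3$ depending only on $F$.

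Since \eqref{symc4} is stronger than \eqref{symc2} and \eqref{symc3} (both arguments and the evaluation vector are purely vertical), the bookkeeping is actually slightly simpler than in Lemmas \ref{lem:2.1} and \ref{lem:2.2}; no auxiliary sign manipulation via \eqref{homc2}/\eqref{homc3} is needed. The only point that requires any care is justifying that the difference quotient in $s'$ really does converge to a bounded fourth derivative, i.e. that $F$ has enough regularity on $\{p^{n+1}=0\}\cap\{p'\neq 0\}$ to legitimize the Taylor expansion; this is a standing assumption throughout the preliminary section and is unlikely to be the main technical obstacle here.
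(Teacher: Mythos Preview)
Your proposal is correct and follows essentially the same argument as the paper's own proof: scale via $s'=1/s$ and homogeneity to get the factor $s'^2$, subtract the zero from \eqref{symc4} to produce a difference quotient and gain the extra $s'$, pass to the limit to identify $D^4F\big|_{(p',0)}((0,-1),(0,1),(0,1),(0,1))$, conclude the uniform bound in $s$, and specialize to $s=1$. The only differences are expository (your remarks on why no sign manipulation via \eqref{homc2}/\eqref{homc3} is needed and on the regularity assumption), not mathematical.
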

\begin{proof} The proof is very similar to the above.
Using equation \eqref{symc4}, we have,
\begin{equation*}
\begin{split}
    & \ \ \ \  D^3 F\Big|_{(sp',-1)} ((0,1),(0,1),(0,1) ) = D^3 F\Big|_{(\frac{p'}{s'},-1)} ((0,1),(0,1),(0,1) ) \\
    & = s'^2 D^3 F\Big|_{(p',-s')} ((0,1),(0,1),(0,1) ) \\
    & = s'^3 \frac{ D^3 F\Big|_{(p',-s')} ((0,1),(0,1),(0,1) ) - D^3 F\Big|_{(p',0)} ((0,1),(0,1),(0,1) ) }{s'},
\end{split}
\end{equation*}
so that
\begin{equation*}
    \lim_{s\rightarrow \infty} |sp'|^3 D^3 F\Big|_{(sp',-1)} ((0,1),(0,1),(0,1) = |p'|^3 D^4 F\Big|_{(p',0)} ((0,-1),(0,1),(0,1),(0,1) ).
\end{equation*}
Again, the right hand side is bounded 
\begin{equation*}
    \sup_{s\in[0,+\infty)} |sp'|^3 D^3 F\Big|_{(sp',-1)} ((0,1),(0,1),(0,1)) < \infty.
\end{equation*}
The claim thus follows as before.
\end{proof}

Next we will introduce the following tensor of order three: 
\begin{equation} \label{eq:def:T3}
    T^3_{ijl} = -D_{p_l} \left(|p| D^2_{p_i p_j} F\right)\Big|_{(p',-1)} 
    = -|p| D^3_{p_i p_j p_l} F\Big|_{(p',-1)} - D^2_{p_i p_j} F\Big|_{(p',-1)} \frac{p_l}{|p|} .
\end{equation}
which will be used frequently in the next section.
Note the symmetry property of $T^3$: $T^3_{ijl} = T^3_{jil}$.
The following lemma gives various estimates of $T^3$, in particular, its 
degeneracy along certain directions.
\begin{lemma} \label{lem:2.4}
Recall the basis from \eqref{ONB} with $\phi^n = \frac{p'}{|p'|}$. There exist positive constants $C_4, C_5, C_6$ depending only on $F$ such that for any $1\leq \alpha,\beta \leq n-1$, we have 
\begin{equation*}
    |T^3 (\phi^{\alpha}, \phi^{\alpha}, \phi^{\beta})| \leq \frac{C_4}{|p|},
\end{equation*}
\begin{equation*}
     |T^3 (\phi^{\alpha}, \phi^{\alpha}, \phi^{n})|,\ |T^3 (\phi^{\alpha}, \phi^n, \phi^n)|, \  |T^3 (\phi^{n}, \phi^{n}, \phi^{\alpha})| \leq \frac{C_4}{|p|^3},
\end{equation*}
\begin{equation*}
    | T^3 (\phi^{\alpha}, \phi^n, \phi^{\beta})| \leq \frac{C_4}{|p|} \quad (\alpha\neq \beta),
\end{equation*}
\begin{equation*}
     \frac{C_5}{|p|} \leq T^3 (\phi^{\alpha}, \phi^n, \phi^{\alpha}) \leq \frac{C_6}{|p|},
\end{equation*}
\begin{equation*}
\frac{C_5}{|p|^3} \leq T^3 (\phi^{n}, \phi^{n}, \phi^n)  \leq \frac{C_6}{|p|^3}.
\end{equation*}
\end{lemma}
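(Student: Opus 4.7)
My plan is to expand $T^3$ via \eqref{eq:def:T3} into two summands,
\[
T^3(u,v,w) = -|p|\,D^3F|_{(p',-1)}((u,0),(v,0),(w,0)) - D^2F|_{(p',-1)}((u,0),(v,0))\,\frac{p'\cdot w}{|p|},
\]
and handle the seven cases separately. The two key tools used throughout are: the consequence of \eqref{homc3} obtained by writing $(p',-1)=|p'|(\phi^n,0)-(0,1)$, namely
\begin{equation*}
|p'|\,D^3F|_{(p',-1)}\!\bigl((\phi^n,0),X,Y\bigr) = D^3F|_{(p',-1)}\!\bigl((0,1),X,Y\bigr) - D^2F|_{(p',-1)}(X,Y),
\end{equation*}
which trades a $(\phi^n,0)$-slot in $D^3F$ for a $(0,1)$-slot controlled by Lemmas \ref{lem:2.2}--\ref{lem:2.3} together with a $D^2F$ correction; and the elementary identity $p'\cdot\phi^\beta=0$ for $\beta\leq n-1$, which kills the second term of $T^3$ whenever the third argument is such a $\phi^\beta$.

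The cases with $O(1/|p|)$ upper bound all follow from this second observation combined with the generic estimate $D^3F\sim O(1/|p|^2)$ from \eqref{D3F.bd}: this immediately handles $T^3(\phi^\alpha,\phi^\alpha,\phi^\beta)$ and $T^3(\phi^\alpha,\phi^n,\phi^\beta)$ for $\alpha\neq\beta$. The $O(1/|p|^3)$ bounds require cancellation. For $T^3(\phi^\alpha,\phi^\alpha,\phi^n)$ I would use the symmetry of $D^3F$ and apply the above identity with $X=Y=(\phi^\alpha,0)$; substituting back, the $D^2F(\phi^\alpha,\phi^\alpha)$ contributions from the two summands of $T^3$ combine into $D^2F(\phi^\alpha,\phi^\alpha)\,(|p|/|p'|-|p'|/|p|)=D^2F(\phi^\alpha,\phi^\alpha)/(|p||p'|)$, which is $O(1/|p|^3)$, and the Lemma \ref{lem:2.2} residual contributes the same order. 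For $T^3(\phi^\alpha,\phi^n,\phi^n)$ and $T^3(\phi^n,\phi^n,\phi^\alpha)$ I would additionally exploit that $D^2F(\phi^\alpha,\phi^n)$ is itself $O(1/|p'|^3)$ --- obtained by combining $D^2F|_{(p',-1)}((\phi^\alpha,0),(p',-1))=0$ from \eqref{homc2} with Lemma \ref{lem:2.1} --- and iterate the identity to get $D^3F((\phi^n,0),(\phi^\alpha,0),(\phi^n,0))=O(1/|p'|^4)$.

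For the two-sided bound on $T^3(\phi^\alpha,\phi^n,\phi^\alpha)$, the second term vanishes ($p'\cdot\phi^\alpha=0$) and, after applying the identity, the leading contribution in the first term is $\tau^{\alpha\alpha}/|p'|$; convexity of $F$ together with the hypothesis that $F$ lies close to $\overline F(p)=|p|$ places $\tau^{\alpha\alpha}$ between two positive constants (it equals $1$ in the isotropic case), yielding the $C_5/|p|$ lower bound. For $T^3(\phi^n,\phi^n,\phi^n)$ I would apply the identity \emph{twice}, collapsing all $D^2F(\phi^n,\phi^n)$ contributions and reducing the expression to $-(|p|/|p'|)\,D^3F|_{(p',-1)}((0,1),(\phi^n,0),(\phi^n,0))$ plus negligible $O(1/|p|^5)$ corrections; a direct computation in the isotropic case gives this leading term as $2|p'|/|p|^4$, which is positive and of order $1/|p|^3$, and the sign and magnitude persist by perturbation.

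The main obstacle I expect is the positive lower bound on $T^3(\phi^n,\phi^n,\phi^n)$: unlike the upper bounds, which come straight from the absolute-value estimates in Lemmas \ref{lem:2.1}--\ref{lem:2.3}, the lower bound needs a sign-definite leading term extracted after two successive rounds of cancellation, and this is ultimately secured by the hypothesis that $F$ does not deviate too far from the isotropic $\overline F$.
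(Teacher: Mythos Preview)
Your approach is essentially the paper's: expand $T^3$ via \eqref{eq:def:T3}, use the identity from \eqref{homc3} in the form $|p'|\,D^3F|_{(p',-1)}((\phi^n,0),X,Y)=D^3F|_{(p',-1)}((0,1),X,Y)-D^2F|_{(p',-1)}(X,Y)$ to trade $(\phi^n,0)$-slots for $(0,1)$-slots, and then invoke Lemmas \ref{lem:2.1}--\ref{lem:2.3}. The upper-bound cases and the two-sided bound on $T^3(\phi^\alpha,\phi^n,\phi^\alpha)$ match the paper's argument almost verbatim.

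The one real divergence is the lower bound on $T^3(\phi^n,\phi^n,\phi^n)$. You stop after one application of the identity, isolate $-\tfrac{|p|}{|p'|}D^3F|_{(p',-1)}((0,1),(\phi^n,0),(\phi^n,0))$, and argue positivity by perturbation from the isotropic case. The paper instead pushes the reduction one step further, rewriting that remaining $D^3F$ term via $D^2F(\phi^n,\phi^n)$ and $D^3F|_{(p',-1)}((0,1),(0,1),(0,1))$, to obtain
\[
T^3(\phi^n,\phi^n,\phi^n)=\frac{2|p|^2+1}{|p'|\,|p|^2}\,\tau^{nn}\;+\;O\!\left(\frac{1}{|p|^5}\right),
\]
the residual being controlled by Lemma \ref{lem:2.3}. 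The lower bound then follows directly from $\tau^{nn}\geq C/|p|^2$, which the paper attributes to the (strict) convexity of $F$---no closeness-to-isotropic hypothesis is invoked. Your perturbation argument only delivers the lower bound when $F$ is $C^3$-close to $\overline F$; that is enough for the applications in Sections \ref{sec:3}--\ref{sec:4}, but the lemma as stated is meant to hold for any uniformly convex $F$ satisfying \eqref{symc0}. As a side remark, your phrase ``collapsing all $D^2F(\phi^n,\phi^n)$ contributions'' is misleading: the leading term you isolate still carries the dominant $\tau^{nn}$ piece hidden inside $D^3F((0,1),\phi^n,\phi^n)$, and this is precisely what the paper extracts to get the sign.
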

\begin{proof}
Firstly, from \eqref{homc3}, we calculate the following to apply the results from Lemma \ref{lem:2.1} -- \ref{lem:2.3} to $D^3 F$ evaluated on basis elements,
\begin{equation*}
\begin{split}
     & D^3 F\Big|_{(p',-1)} ((p',0),(q',0),(r',0) ) \\
     = & - D^2 F\Big|_{(p',-1)} ((q',0), (r',0)) + D^3 F\Big|_{(p',-1)} ((0,1),(q',0),(r',0) ). \\
\end{split}
\end{equation*}
In addition, we have
\begin{equation*}
\begin{split}    
   &  D^3 F\Big|_{(p',-1)} ((p', 0), (p', 0), (p', 0)) \\
    = & - D^2 F\Big|_{(p',-1)} ((p', 0), (p', 0)) + D^3 F\Big|_{(p',-1)} ((0, 1), (p', 0), (p', 0)) \\
    & D^3 F\Big|_{(p',-1)} ((0, 1), (p', 0), (p', 0))  \\
    = & D^3 F\Big|_{(p',-1)} ((0, 1), (p', -1), (p', -1)) + D^3 F\Big|_{(p',-1)} ((0, 1), (p', -1), (0, 1)) \\
    & + D^3 F\Big|_{(p',-1)} ((0, 1), (0, 1), (p', -1)) + D^3 F\Big|_{(p',-1)} ((0, 1), (0, 1), (0, 1)) \\
    = & -2 D^2 F\Big|_{(p',-1)} ((0, 1), (0, 1)) + D^3 F\Big|_{(p',-1)} ((0, 1), (0, 1), (0, 1)) \\
    = & -2 D^2 F\Big|_{(p',-1)} ((p', 0), (p', 0)) + D^3 F\Big|_{(p',-1)} ((0, 1), (0, 1), (0, 1)).
\end{split}
\end{equation*}
Hence
\begin{equation*}
\begin{split}
    &  D^3 F\Big|_{(p',-1)} ((p', 0), (p', 0), (p', 0)) \\
    = & -3 D^2 F\Big|_{(p',-1)} ((p', 0), (p', 0)) + D^3 F\Big|_{(p',-1)} ((0, 1), (0, 1), (0, 1)).
\end{split}
\end{equation*}

To conclude, we rewrite results from Lemma \ref{lem:2.1} to \ref{lem:2.3} with respect to the basis elements
\eqref{ONB} $\{\phi^1, \cdots, \phi^n\}$ as follows:
for $1 \leq \tilde{\alpha}, \tilde{\beta}\leq n$, it holds that
\begin{equation} \label{eq:tau_an}
    \tau^{\tilde{\alpha}n} = \tau^{n \tilde{\alpha} } = |p| \  D^2 F\Big|_{(p',-1)}(\phi^{\tilde{\alpha}}, \phi^n) \sim O \left( \frac{1}{|p|^2}    \right),
\end{equation}
\begin{equation}  \label{eq:D3F_nab}
    D^3 F\Big|_{(p',-1)} (\phi^n,\phi^{\tilde{\alpha}}, \phi^{\tilde{\beta}} ) =  \frac{1}{|p'|} \left( -D^2 F\Big|_{(p',-1)}(\phi^{\tilde{\alpha}}, \phi^{\tilde{\beta}}) + D^3 F\Big|_{(p',-1)}((0,1),\phi^{\tilde{\alpha}}, \phi^{\tilde{\beta}}) \right).
\end{equation}
In particular, we have
\begin{equation}  \label{eq:D3F_nnn}
    D^3 F\Big|_{(p',-1)} (\phi^n,\phi^n, \phi^n ) = \frac{1}{|p'|} \left( -3 D^2 F\Big|_{(p',-1)}(\phi^{n}, \phi^{n}) + \frac{1}{|p'|^2}  D^3 F\Big|_{(p',-1)}((0,1),(0,1),(0,1)) \right).
\end{equation}
From \eqref{eq:D3F_nab} and \eqref{eq:D3F_nnn}, we compute
\begin{eqnarray*}
T^3 (\phi^{\alpha}, \phi^{\alpha}, \phi^{n})    
     & = & - |p| D^3 F\Big|_{(p',-1)} (\phi^{\alpha}, \phi^{\alpha}, \phi^{n}) - \frac{|p'|}{|p|} D^2 F\Big|_{(p',-1)} (\phi^{\alpha}, \phi^{\alpha})    \\
    & = & - \frac{|p|}{|p'|} D^3 F\Big|_{(p',-1)} ((0,1),\phi^{\alpha}, \phi^{\alpha}) + \frac{1}{|p|^2|p'|}  \tau^{\alpha \alpha} \ , \\
    T^3 (\phi^{\alpha}, \phi^n, \phi^n) & = & - |p| D^3 F\Big|_{(p',-1)} (\phi^{\alpha}, \phi^n, \phi^n) - \frac{|p'|}{|p|} D^2 F\Big|_{(p',-1)} (\phi^{\alpha}, \phi^n) \\
     & = & - \frac{|p|}{|p'|} D^3 F\Big|_{(p',-1)} ((0,1), \phi^n, \phi^{\alpha}) + \frac{1}{|p|^2|p'|} \tau^{\alpha n} \ , \\
    T^3 (\phi^{n}, \phi^{n}, \phi^{\alpha}) & = & - |p| D^3 F\Big|_{(p',-1)} (\phi^n, \phi^n, \phi^{\alpha}) \\
    & = & - \frac{|p|}{|p'|} D^3 F\Big|_{(p',-1)} ((0,1), \phi^n, \phi^{\alpha}) + \frac{1}{|p'|} \tau^{\alpha n} \ , \\ 
    T^3 (\phi^{\alpha}, \phi^n, \phi^{\beta}) & = & -|p| D^3 F\Big|_{(p',-1)} (\phi^{\alpha}, \phi^n, \phi^{\beta})  \\
    & = &  - \frac{|p|}{|p'|} D^3 F\Big|_{(p',-1)} ((0,1),\phi^{\alpha}, \phi^{\beta}) + \frac{1}{|p'|} \tau^{\alpha \beta} \ , \\
   T^3 (\phi^{n}, \phi^{n}, \phi^n) & = &  -|p| D^3 F\Big|_{(p',-1)} (\phi^n, \phi^n, \phi^n ) - \frac{|p'|}{|p|} D^2 F\Big|_{(p',-1)} (\phi^n, \phi^n ) \\
   & = & \frac{|p|}{|p'|^3} D^3 F\Big|_{(p',-1)} ((0,1)), (0,1), (0,1)) + \frac{2|p|^2 + 1}{|p'| |p|^2} \tau^{n n} \ . 
\end{eqnarray*}
Then Lemmas \ref{lem:2.2} and \ref{lem:2.3}, estimate \eqref{eq:tau_an} and the
homogeneity of $F$ suggest that there exists a constant $C_4$ such that 
\begin{equation*}
    |T^3 (\phi^{\alpha}, \phi^{\alpha}, \phi^{\beta})| \leq \frac{C_4}{|p|},
\end{equation*}
\begin{equation*}
     |T^3 (\phi^{\alpha}, \phi^{\alpha}, \phi^{n})|,\ |T^3 (\phi^{\alpha}, \phi^n, \phi^n)|, \ |T^3 (\phi^{n}, \phi^{n}, \phi^{\alpha})| \leq \frac{C_4}{|p|^3},
\end{equation*}
\begin{equation*}
    | T^3 (\phi^{\alpha}, \phi^n, \phi^{\beta})| \leq \frac{C_4}{|p|} \quad (\alpha\neq \beta).
\end{equation*}
By the fact that $F$ is convex and homogeneous of degree one, we have that 
$ \tau^{\alpha \alpha}\geq C$ and $\tau^{n n}\geq \frac{C}{|p|^2|}$ for some positive constant $C$.
Hence there are constants $C_5$ and $C_6$ such that
\begin{equation*}
    \frac{C_5}{|p|} \leq T^3 (\phi^{\alpha}, \phi^n, \phi^{\alpha}) \leq \frac{C_6}{|p|},
\end{equation*}
\begin{equation*}
    \frac{C_5}{|p|^3} \leq T^3 (\phi^{n}, \phi^{n}, \phi^n)  \leq \frac{C_6}{|p|^3}
\end{equation*}
completing the proof.
\end{proof}

Combining the previous considerations, we can assert that for our anisotropic function $F$, 
there are positive constants $c_1,\ c_2$ and $c_3$ such that for any $1\leq \alpha,\beta \leq n-1$, with $\alpha\neq\beta$, the following statements hold:
\begin{equation} \label{eq:F_cond1}
     c_1 \leq\tau^{\alpha \alpha} =  |p| D^2 F\Big|_{(p',-1)} (\phi^{\alpha}, \phi^{\alpha}) \leq c_3 , \quad \frac{c_1}{|p|^2} \leq \tau^{n n} = |p| D^2 F\Big|_{(p',-1)} (\phi^n, \phi^n) \leq \frac{c_3}{|p|^2} ,
\end{equation}
and 
\begin{equation} \label{eq:F_cond2}
    |\tau^{\alpha \beta}| = |p|\left|D^2 F\Big|_{(p',-1)} (\phi^{\alpha}, \phi^{\beta})\right| \leq c_2, \quad 
    |\tau^{\alpha n}| = |p|\left| D^2 F\Big|_{(p',-1)} (\phi^\alpha, \phi^n)\right|\leq \frac{c_2}{|p|^2}.
\end{equation}
The first parts of \eqref{eq:F_cond1} and \eqref{eq:F_cond2} are due to the homogeneity property of $F$ while the second parts are from Lemma \ref{lem:2.1}. For later convenience, without loss of generality, we can choose $c_1$ and $c_2$ such that
\begin{equation} \label{eq:F_cond3}
   c_1 \leq C_5 \,\,\,\text{and}\,\, \, C_4 \leq c_2.
\end{equation}
Note that in the isotropic case, we can take $c_1 =c_3 = 1$ and $c_2 = 0$. 
Hence the ratio $c_2/c_1$ suggests a way to measure how far the anisotropic function $F$ deviates from the isotropic function.

\subsection{Application} \label{Sec:2.2}
In this section, we apply the previous properties of the anisotropic function 
$F$ to the calculations in the derivation of the gradient estimate. 
For what follows, for a smooth function $u$, we introduce
\begin{equation*}
    p'=Du,\,\,\,p=(Du,-1),\,\,\,v=|p|= \sqrt{1+|Du|^2}.
\end{equation*}
Now the orthonormal basis \eqref{ONB} becomes:
\begin{equation}
\label{ONB2}
    \phi^1, \cdots, \phi^{n-1}, \phi^n = \frac{Du}{|Du|},
    \quad\text{for $Du\neq 0$}.
\end{equation}
(Later on, we will in fact just handle the case $|Du|>1$.)
We remark that in the isotropic case $\overline{F}(p)=|p|$, upon writing
$G(Du) = \big(|p| \ D^2 \overline{F}\big) \Big|_{(Du,-1)} $, that is 
\begin{equation*}
    G(Du) = \left[ \delta_{ij} - \frac{u_{x_i} u_{x_j}}{1+|Du|^2} \right]_{n\times n} = \left( \begin{array}{ccc}
       1 - \frac{u_{x_1} u_{x_1}}{1+|Du^2|}  & \cdots & -\frac{u_{x_1} u_{x_n}}{1+|Du|^2}  \\
        \vdots & \ddots & \vdots \\
        -\frac{u_{x_n} u_{x_1}}{1+|Du|^2} & \cdots & 1 - \frac{u_{x_n} u_{x_n}}{1+|Du|^2}
    \end{array}    \right)\ ,
\end{equation*}
then $\phi^n$ is the eigenvector of $G$ with eigenvalue $\frac{1}{v^2}$, while any vector in the orthogonal subspace is an eigenvector of $G$ with eigenvalue $1$. 

Now the basis $\{\phi^1, \cdots, \phi^n\}$ of $\mathbb{R}^n$ defined earlier induces an orthonormal basis of $n \times n$ symmetric matrices with respect to the Frobenius norm -- $\|M\| = \sqrt{\text{tr}(M^TM)}$: 
\begin{equation}\label{eq:PhiBasis}
\begin{split}
    &  \Phi^{\alpha \alpha} = \phi^{\alpha} (\phi^{\alpha})^T, \ \Phi^{\alpha \beta} = \frac{1}{\sqrt{2}} \left( \phi^{\alpha} (\phi^{\beta})^T + \phi^{\beta} (\phi^{\alpha})^T \right), \quad 1\leq \alpha < \beta \leq n-1,  \\
    &  \Phi^{\alpha n} = \frac{1}{\sqrt{2}} \left( \phi^{\alpha} (\phi^{n})^T + \phi^{n} (\phi^{\alpha})^T \right), \ \Phi^{n n} = \phi^{n} (\phi^{n})^T, \quad 1 \leq \alpha \leq n-1.
\end{split}
\end{equation}
(Note that the number of elements above is $n(n+1)/2$ which is exactly the dimension of $n\times n$ symmetric
matrices.) With that, the Hessian matrix $D^2u$ of $u$ at any given point can be written as a linear combination of the above $\Phi^{(\cdot, \cdot)}$'s:
\begin{equation}\label{D2U.Decomp}
    D^2 u =  \sum_{1\leq \alpha\leq\beta\leq n-1} \gamma^{\alpha \beta} \Phi^{\alpha \beta} + \sum_{1\leq\alpha\leq n-1} \gamma^{\alpha n} \Phi^{\alpha n} + \gamma^{n n} \Phi^{n n}.
\end{equation}

For the rest of this paper, we will make the further choice that 
$\big\{\phi^\alpha\big\}_{1\leq\alpha\leq n-1}$ are the eigenvectors of $D^2u$ (orthogonal to $\phi^n$).
Then the terms $\gamma^{\alpha \beta}$ in \eqref{D2U.Decomp} will vanish for $\alpha \neq \beta$ so that we have
\begin{equation*}
    D^2 u = \sum_{\alpha=1}^{n-1} \gamma^{\alpha \alpha} \Phi^{\alpha \alpha} + \sum_{\alpha=1}^{n-1} \gamma^{\alpha n} \Phi^{\alpha n} + \gamma^{n n} \Phi^{n n}.
\end{equation*}

Next, we want to compute the expression $T^3(D^2 u, V )$ for some given vector 
$$V =\sum_{1\leq\beta\leq n-1} \eta^{\beta} \phi^{\beta} + \eta^n \phi^n$$
where the functions $\eta^{\beta}$ are defined by
\begin{equation*}
    \eta^{\beta} = \gamma^{\beta \beta} \rho^{\beta} + \frac{ \gamma^{\beta n} }{\sqrt{2}} \rho^n,\,\,\,\text{for $1\leq \beta\leq n-1$, and}\,\,\,
    \eta^n = \sum_{1\leq \delta\leq n-1} \frac{\gamma^{\delta n}}{\sqrt2} \rho^{\delta}  + \gamma^{n n} \rho^n ,  
\end{equation*}
with $\rho^{\beta}$ and $\rho^n$ to be given in 
Section \ref{sec:ang_grad_case_2} \eqref{eq:ang_rho_def} and Section \ref{sec:4} \eqref{eq:neu_rho_def}.
Note that by means of definition \eqref{eq:def:T3} and the symmetry property of
$T^3$, we can abuse the notation $T^3(u_1\otimes u_2, u_3)$ as
$T^3(u_1, u_2, u_3)$.
With this in mind, we can write
\begin{equation}
    T^3(D^2 u, V ) = T^3\Big( \sum_{\alpha} \gamma^{\alpha \alpha} \Phi^{\alpha \alpha} + \sum_{\alpha} \gamma^{\alpha n} \Phi^{\alpha n} + \gamma^{n n} \Phi^{n n} , \sum_{\beta} \eta^{\beta} \phi^{\beta} + \eta^n \phi^n \Big).
\end{equation}
We compute this by evaluating $T^3$ on the basis elements $(\phi^1, \cdots, \phi^n)$ leading to the following decomposition:
\begin{equation} \label{eq:T3_cal1}
\begin{split}
    T^3(D^2 u, V ) = & \sum_{\alpha,\beta} \gamma^{\alpha \alpha} \eta^{\beta} T^3 (\phi^{\alpha}, \phi^{\alpha}, \phi^{\beta}) + \sum_{\alpha} \gamma^{\alpha \alpha} \eta^{n} T^3 (\phi^{\alpha}, \phi^{\alpha}, \phi^{n}) \\
    & + \sum_{\alpha,\beta} \sqrt{2} \gamma^{\alpha n} \eta^{\beta} T^3 (\phi^{\alpha}, \phi^{n}, \phi^{\beta}) + \sum_{\alpha} \sqrt{2} \gamma^{\alpha n} \eta^{n} T^3 (\phi^{\alpha}, \phi^{n}, \phi^{n}) \\
    & + \sum_{\beta} \gamma^{n n} \eta^{\beta} T^3 (\phi^{n}, \phi^{n}, \phi^{\beta}) + \gamma^{n n} \eta^{n} T^3 (\phi^{n}, \phi^{n}, \phi^{n}).   
\end{split}
\end{equation}
In order to simplify the upcoming calculations, we define the following quantities:
\begin{equation*}
\begin{split}
    & T^3_1 = \sum_{\alpha,\beta} \gamma^{\alpha \alpha} \eta^{\beta} T^3 (\phi^{\alpha}, \phi^{\alpha}, \phi^{\beta}), \quad  T^3_2 = \sum_{\alpha} \gamma^{\alpha \alpha} \eta^{n} T^3 (\phi^{\alpha}, \phi^{\alpha}, \phi^{n}), \\
    & T^3_3 = \sum_{\alpha}  \sqrt{2} \gamma^{\alpha n} \eta^{\alpha} T^3 (\phi^{\alpha}, \phi^{n}, \phi^{\alpha}), \quad T^3_4 = \sum_{\alpha \neq \beta}  \sqrt{2} \gamma^{\alpha n} \eta^{\beta} T^3 (\phi^{\alpha}, \phi^{n}, \phi^{\beta}), \\
    & T^3_5 = \sum_{\alpha} \sqrt{2} \gamma^{\alpha n} \eta^{n} T^3 (\phi^{\alpha}, \phi^{n}, \phi^{n}), \\
    & T^3_6 = \sum_{\alpha} \gamma^{n n} \eta^{\alpha} T^3 (\phi^{n}, \phi^{n}, \phi^{\alpha}), \quad  T^3_7 =  \gamma^{n n} \eta^{n} T^3 (\phi^{n}, \phi^{n}, \phi^{n}),    
\end{split}
\end{equation*}
which can be rewritten as
\begin{equation} \label{eq:T3_cal2}
\begin{split}
    & T^3_1 = \sum_{1\leq\alpha, \beta\leq n-1} \left(   \gamma^{\alpha \alpha} \gamma^{\beta \beta} \rho^{\beta} + \gamma^{\alpha \alpha} \frac{\gamma^{\beta n}}{\sqrt{2}} \rho^n \right)  T^3 (\phi^{\alpha}, \phi^{\alpha}, \phi^{\beta}), \\
    & T^3_2 = \sum_{1\leq\alpha, \beta\leq n-1} \gamma^{\alpha \alpha} \frac{\gamma^{\beta n}}{\sqrt{2}} \rho^{\beta} T^3 (\phi^{\alpha}, \phi^{\alpha}, \phi^{n}) + \sum_{1\leq\alpha\leq n-1} \gamma^{\alpha \alpha} \gamma^{n n} \rho^n   T^3 (\phi^{\alpha}, \phi^{\alpha}, \phi^{n}), \\
    & T^3_3 = \sum_{1\leq \alpha \leq n-1} \left(  \sqrt{2} \gamma^{\alpha n} \gamma^{\alpha \alpha} \rho^{\alpha} +  (\gamma^{\alpha n})^2  \rho^n  \right) T^3 (\phi^{\alpha}, \phi^{n}, \phi^{\alpha}),  \\ 
    & T^3_4 =  \sum_{1 \leq \alpha, \beta \leq n-1, \alpha \neq \beta} \left(  \sqrt{2} \gamma^{\alpha n} \gamma^{\beta \beta} \rho^{\beta} +  \gamma^{\alpha n} \gamma^{\beta n} \rho^n  \right) T^3 (\phi^{\alpha}, \phi^{n}, \phi^{\beta}), \\
    & T^3_5 =  \sum_{1\leq\alpha, \beta \leq n-1}  \gamma^{\alpha n} \gamma^{\beta n} \rho^{\beta} T^3 (\phi^{\alpha}, \phi^n, \phi^n) + \sum_{1\leq\alpha \leq n-1 } \sqrt{2} \gamma^{\alpha n} \gamma^{n n} \rho^n  T^3 (\phi^{\alpha}, \phi^n, \phi^n),   \\
    & T^3_6 = \sum_{1\leq\alpha \leq n-1} \left(  \gamma^{\alpha \alpha} \gamma^{n n} \rho^{\alpha} + \frac{\gamma^{\alpha n}}{\sqrt{2}} \gamma^{n n} \rho^n  \right) T^3 (\phi^{n}, \phi^{n}, \phi^{\alpha}),  \\
    & T^3_7 = \left( \sum_{1\leq\alpha \leq n-1}  \frac{\gamma^{n n} \gamma^{\alpha n}}{\sqrt{2}} \rho^{\alpha} + (\gamma^{n n})^2 \rho^n \right)  T^3 (\phi^{n}, \phi^{n}, \phi^{n}).
\end{split}
\end{equation}

With the above formulations, we are now ready to analyze the solutions of our boundary value problems.

\section{Contact angle problem} \label{sec:3}
In this section, we study the anisotropic mean curvature flow with prescribed contact angle boundary condition \eqref{eq:angpro}. Firstly, we establish a prior gradient estimate independent of time to the solution of \eqref{eq:angpro}. Then we show that we can apply the same technique to the elliptic version of the boundary problem the solution of which induces a translating solution to the parabolic problem. Finally, we study the asymptotic behavior of the solution to \eqref{eq:angpro}. For the isotropic case, similar approaches have been used in \cite{AW2}, \cite{MWW} and \cite{GMWW}. 

For convenience, we record here again our contact angle boundary value problem
\eqref{eq:angpro}: 
\begin{equation} \label{eq:angbry}
    \left\{ \begin{array}{cllcl}
            u_t & = & \sqrt{1+|Du|^2} \  D^2_{p_i p_j} F (Du,-1) u_{x_i x_j}   \quad &  \text{in} & \ {Q}_T , \\
            D_N u & = & -  \cos \theta \sqrt{1+|Du|^2}  \quad &  \text{on} & \ \Gamma_T ,  \\
         u(\cdot,0) & = & u_0(\cdot)  \quad \ & \text{in} & \ \Omega_0 .
    \end{array}  \right.
\end{equation}
Before we state the main results, we make the following assumptions for our
contact angle problem:
\begin{description}
\item[A1.] 
Let $\Omega\in \mathbb{R}^n$ be a smooth strictly convex bounded domain. There exists a positive constant $k_0$ such that the curvature matrix $K = \{ k_{ij} \}_{i,j=1}^{n-1}$ (differential of the normal map) at any point of $\partial\Omega$ satisfies the following condition,
\begin{equation*}
    K \geq k_0 I.
\end{equation*}
\item[A2.]
The contact angle function $\theta$ in \eqref{eq:angbry} can be extended to $\overline{\Omega}$ with $\theta \in C^3 (\overline{\Omega})$. Furthermore, there exist a positive constant, $\epsilon_1$ depending only on $\partial\Omega$ such that 
\begin{equation} \label{eq:ang_condA2}
    |\cos{\theta}|\leq \epsilon_1 <1, \quad \text{and} \quad \| D\theta\|_{C^1(\overline{\Omega})} \leq \epsilon_1.
\end{equation}
The first condition implies that $\theta$ is bounded away from $0$ and $\pi$.

\item[A3.] The initial condition $u_0$ is assumed to be $C^3(\overline{\Omega})$ and satisfies $D_N u_0 = - \cos \theta \sqrt{1+|Du_0|^2} $ on $\partial \Omega$.

\item[A4.] Recall $c_1$ and $c_2$ in Section \ref{sec:2.1} \eqref{eq:F_cond1}--\eqref{eq:F_cond3}. There exists a positive constant $\epsilon_2$ depending only on $n$ such that
\begin{equation} \label{eq:ang_condA4}
    \frac{c_2}{c_1} < \epsilon_2.
\end{equation}
\end{description}

\begin{theorem} \label{thm:3.1}
Assume {\bf A1} -- {\bf A4} and 
$\epsilon_1$, $\epsilon_2$ are sufficiently small, then
there exists a constant $C$ such that any solution $u(x,t) \in C^{3,2} (\overline{Q_T})$ of \eqref{eq:angbry}
satisfies
\begin{equation}\label{eq:ang_grad.est}
    \sup_{\overline{Q_T}} |Du| \leq C,
\end{equation}
where $C$ is independent of time.
\end{theorem}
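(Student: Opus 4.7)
\emph{Plan for Theorem \ref{thm:3.1}.} The plan is to apply the parabolic maximum principle to an auxiliary function built from $v := \sqrt{1+|Du|^2}$ with suitable lower-order corrections, reducing \eqref{eq:ang_grad.est} to ruling out a boundary maximum at which $v$ is large. A direct differentiation of the equation shows that $v$ satisfies a parabolic inequality of the schematic form $v_t - a^{ij} v_{ij} \leq (\text{controlled contractions of } D^3 F \text{ with } D^2 u \otimes Du)$; hence any interior maximum point of $v$ on $Q_T$ gives an immediate gradient bound via the coercive part of $a^{ij}$, while assumption \textbf{A3} handles the slice $t=0$. The substantive work is therefore to exclude a boundary maximum on $\Gamma_T$.

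For the boundary step I would work with an auxiliary function of the form $\Phi = v\, e^{\alpha u} + \beta\, g(x)$ (or $\log v + \alpha u + \beta g$), where $g$ is a smooth extension of $\cos\theta$ to $\overline{\Omega}$ and $\alpha, \beta > 0$ are constants fixed in terms of the geometric data $k_0$, $\epsilon_1$, $\epsilon_2$, $c_1$, $c_2$, $F$. At a putative boundary maximum $(x_0, t_0) \in \Gamma_T$, I would install the adapted frame of Section \ref{Sec:2.2}: $\phi^n = Du/|Du|$, with $\phi^1, \ldots, \phi^{n-1}$ diagonalizing the restriction of $D^2 u$ to $(\phi^n)^\perp$, so that the representation \eqref{D2U.Decomp} reduces to diagonal tangential form. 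Then I would differentiate the contact angle condition $D_N u = -v\cos\theta$ once normally and twice tangentially, insert the curvature of $\partial\Omega$ coming from \textbf{A1}, and expand $D_N\Phi(x_0,t_0)$ in this frame. The goal is a strict inequality $D_N\Phi(x_0,t_0) > 0$ whenever $v(x_0,t_0)$ exceeds an explicit threshold, which contradicts $(x_0,t_0)$ being a maximum of $\Phi$ over $\overline{Q_T}$ and so bounds $v$ on $\Gamma_T$.

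The principal obstacle is precisely the one flagged in the introduction: for $n > 2$ the mixed tangential second derivatives $u_{\alpha\beta}$ and $u_{\alpha n}$ surviving after two tangential differentiations of the boundary condition cannot be eliminated algebraically, and they enter the expansion of $D_N \Phi$ through the third-order tensor $T^3$ of \eqref{eq:def:T3}. The key technical input is the seven-piece decomposition \eqref{eq:T3_cal1}--\eqref{eq:T3_cal2} of $T^3(D^2u, V)$ in the basis \eqref{eq:PhiBasis}. Every ``bad'' contribution factors through a contraction $T^3(\phi^{i_1}, \phi^{i_2}, \phi^{i_3})$ in which $\phi^n$ appears at least once, and by Lemma \ref{lem:2.4} each such contraction is of order $1/v$ or $1/v^3$ with prefactor proportional to $c_2$; the two ``good'' contributions $T^3(\phi^\alpha, \phi^n, \phi^\alpha)$ and $T^3(\phi^n, \phi^n, \phi^n)$, by the same lemma, are sign-definite lower bounds $\geq C_5/v$ and $\geq C_5/v^3$ respectively. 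Combining this coercivity with (i) the strict convexity $K \geq k_0 I$ of \textbf{A1}, (ii) the smallness $|\cos\theta|, \|D\theta\|_{C^1} \leq \epsilon_1$ of \textbf{A2}, and (iii) the anisotropy smallness $c_2/c_1 < \epsilon_2$ of \textbf{A4}, a careful simultaneous calibration of $\alpha, \beta, \epsilon_1, \epsilon_2$ will make the good terms dominate strictly, producing the required $D_N\Phi(x_0,t_0) > 0$. Assembling the interior, initial, and boundary cases then yields \eqref{eq:ang_grad.est} with $C$ independent of $T$. The most delicate quantitative step is exactly this calibration; the point of the estimates in Section \ref{sec:2.1} is to make it feasible.
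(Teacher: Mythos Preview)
Your proposal has the roles of the interior and boundary cases reversed, and this is a genuine gap.

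In the paper's argument (and in the isotropic precursors \cite{GMWW}, \cite{MWW}), the boundary case is the \emph{easy} one: at a boundary maximum of the auxiliary function one uses only the Hopf inequality $D_N\Psi\le 0$, one tangential differentiation of the contact angle condition, and the strict convexity $K\ge k_0 I$. No appeal to the PDE, no $T^3$ tensor, and no anisotropy hypothesis \textbf{A4} is needed there; the bound on $|D_T u|^2$ drops out in a few lines. The passage in the introduction about ``mixed second-order tangential derivatives'' is explaining why the $n=2$ method of \cite{CY1} fails, not describing what the present proof does.

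The hard case is the \emph{interior} maximum, and your claim that it ``gives an immediate gradient bound via the coercive part of $a^{ij}$'' is where the plan breaks. The operator $a^{ij}=v\,D^2_{p_ip_j}F(Du,-1)$ is not uniformly elliptic: by \eqref{eq:F_cond1} its eigenvalue in the $\phi^n$ direction is $\tau^{nn}\sim 1/v^2$, so there is no coercivity to exploit. Moreover, differentiating the equation produces the term $(a^{ij})_{x_k}u_{x_ix_j}$, which is exactly the $T^3$ contraction; this is the term that must be controlled at an interior maximum, not at the boundary. The paper does this by writing $0\ge a^{ij}\Psi_{x_ix_j}-\Psi_t$ at the interior point, decomposing into $J_1+J_2+J_3+J_4$, and showing that the resulting bilinear form $\mathcal{B}(\gamma,\gamma)$ in the $\gamma^{\alpha\beta}$ coordinates is positive semidefinite once $\epsilon_1,\epsilon_2$ are small---which is precisely where Lemma~\ref{lem:2.4} and the seven-piece decomposition \eqref{eq:T3_cal1}--\eqref{eq:T3_cal2} enter.

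A secondary issue: your auxiliary function involves $u$ itself (via $e^{\alpha u}$ or $\alpha u$), but $u$ is not a priori bounded in time---it grows like $\lambda t$ (Corollary~\ref{cor:3.4})---so such a $\Phi$ cannot give a time-independent bound. The paper instead uses $\Psi=\log W + a_0 h$ with $W=v-\langle Du,Dh\rangle\cos\theta$ and $h$ a fixed defining function of $\Omega$ satisfying \eqref{eq:h_cond}; the correction $\langle Du,Dh\rangle\cos\theta$ is what makes the boundary computation close, while the $a_0 h$ term supplies the good interior term $I_3\ge a_0 k_1\sum_\alpha\tau^{\alpha\alpha}$.
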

With that, we can establish the long time behavior of the solution. The results will be stated in Section \ref{sec:longtimeangle}.

Before presenting the proof, we state the following lemma. The proof is omitted as it can be found in \cite{AW2} and also our previous paper \cite{CY1}.
\begin{lemma}\label{lem:3.2}
Let $u(x,t)$ be a smooth solution to \eqref{eq:angbry}. Then there exists a constant 
$C=C(u_0)$ such that 
\begin{equation*}
    \sup_{Q_T} |u_t|^2=\sup_{\Omega_0} |u_t|^2 \leq C.
\end{equation*}
\end{lemma}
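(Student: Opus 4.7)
The plan is to show that the extrema of $w := u_t$ over $Q_T$ are attained on the initial slice $\Omega_0$, via the parabolic maximum principle together with a Hopf boundary point argument. Differentiating the PDE in \eqref{eq:angbry} with respect to $t$ and using that $a^{ij}(Du) = \sqrt{1+|Du|^2}\, D^2_{p_ip_j} F(Du,-1)$ depends only on $Du$ (no explicit $t$-dependence), one obtains a linear parabolic equation for $w$ with no zeroth-order term,
\begin{equation*}
w_t \;=\; a^{ij}(Du)\, w_{x_ix_j} \;+\; \frac{\partial a^{ij}}{\partial p_k}(Du)\, u_{x_ix_j}\, w_{x_k}.
\end{equation*}
Since $\theta = \theta(x)$ is time-independent by A2, differentiating the boundary condition $D_N u = -\cos\theta\sqrt{1+|Du|^2}$ in $t$ gives
\begin{equation*}
D_N w \;=\; -\cos\theta\, \frac{Du \cdot Dw}{\sqrt{1+|Du|^2}} \qquad \text{on } \Gamma_T.
\end{equation*}
The regularity $u \in C^{3,2}(\overline{Q_T})$ ensures all coefficients are continuous and $a^{ij}(Du)$ is (strictly, non-uniformly) positive definite, so the classical strong maximum principle and Hopf lemma apply to $w$.

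Suppose for contradiction that $\sup_{Q_T} w > \sup_{\Omega_0} w$, so that $w$ attains its maximum at some $(x_0,t_0)$ with $t_0 > 0$. An interior maximum is excluded by the strong parabolic maximum principle for the zeroth-order-free equation above, so $x_0 \in \partial\Omega$. At $(x_0,t_0)$, since $w(\cdot,t_0)$ is maximized at $x_0$ over all of $\overline{\Omega}$, its tangential derivatives along $\partial\Omega$ vanish, so $Dw = (D_N w)\,N$ there. Substituting into the boundary identity and using $D_N u = -\cos\theta\sqrt{1+|Du|^2}$ yields
\begin{equation*}
D_N w \;=\; -\cos\theta\,\frac{(D_N u)\, D_N w}{\sqrt{1+|Du|^2}} \;=\; \cos^2\theta\, D_N w,
\end{equation*}
so $\sin^2\theta\, D_N w = 0$. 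By A2 one has $\cos^2\theta \leq \epsilon_1^2 < 1$ uniformly, hence $D_N w(x_0,t_0) = 0$. On the other hand, Hopf's lemma applied to our linear parabolic operator at a boundary maximum forces $D_N w(x_0,t_0) < 0$ with respect to the inward normal, a contradiction. Applying the same argument to $-w$ produces the analogous lower bound, so $\sup_{Q_T}|u_t| = \sup_{\Omega_0}|u_t|$. Since $u_0 \in C^3(\overline{\Omega})$ by A3, the expression $u_t|_{t=0} = a^{ij}(Du_0)(u_0)_{x_ix_j}$ is continuous on $\overline{\Omega}$, giving the constant $C = C(u_0)$ as claimed.

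The principal technical subtlety is the interplay between the Hopf sign and the boundary identity at the maximum: after the tangential derivatives of $w$ are killed by the extremality at $x_0$, the boundary condition collapses to an algebraic relation $(1-\cos^2\theta)D_N w = 0$, and one needs $\theta$ bounded away from $0$ and $\pi$ to extract $D_N w = 0$. Without A2's uniform bound $|\cos\theta|\le \epsilon_1<1$ the relation would be vacuous and the argument would fail. A minor additional point is that the degeneracy of $a^{ij}$ for large $|Du|$ must be reconciled with Hopf's lemma, but this is resolved pointwise: at any fixed $(x_0,t_0)$ the matrix $a^{ij}(Du(x_0,t_0))$ is strictly positive definite, which is all the classical Hopf lemma requires.
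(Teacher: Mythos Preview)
Your argument is correct and is precisely the standard maximum-principle/Hopf-lemma proof that the paper has in mind: the paper omits the proof and simply cites \cite{AW2} and \cite{CY1}, where exactly this reasoning appears. One small correction to your final remark: Hopf's lemma needs ellipticity in a \emph{neighborhood} (to build the barrier), not merely at the single point $(x_0,t_0)$; the right justification is that $u\in C^{3,2}(\overline{Q_T})$ forces $|Du|$ to be bounded on $\overline{Q_T}$, so $a^{ij}(Du)$ is uniformly elliptic there and both the strong maximum principle and Hopf apply without qualification.
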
 

Now we proceed to the 

\noindent
\textbf{Proof of Theorem 3.1:} The idea of the proof is from \cite{GMWW}.
Firstly we define the following auxiliary function,
\begin{equation}
    \Psi(x) := \log W(x) + a_0 h(x),
\end{equation}
where 
\begin{equation}\label{W.CAC}
    W= v - \langle Du, Dh\rangle \cos \theta,
\end{equation}
$a_0$ is a positive number to be determined, and the function $h$ satisfies the following conditions with some positive constants $M_1$, $M_2$ and $k_1$ (see for example \cite[p.275]{CNS} for a construction of such a function),
\begin{equation}  \label{eq:h_cond}
\left\{ \begin{array}{cllcl}
           & h=0, \ D_N h=-1 \quad & \text{on} & \ \partial \Omega, \\
     & h<0, \ |Dh|\leq 1, \ D^2 h \geq k_1 I \quad & \text{in} & \ \Omega, \\
     & |D^2 h| \leq M_1,\ |D^3 h| \leq M_2 \quad & \text{in} & \ \Omega.
    \end{array}  \right.
\end{equation}
Notice that with the above condition on $h$, $W$ is a positive function so that
$\log W$ is well-defined. Furthermore, $\Psi$ as a function of $Du$, satisfies
 the following property: for $|Du|$ sufficiently large, there are constants $C_1 < C_2$ such that,
\[
C_1\log|Du|\leq \Psi \leq C_2\log|Du|.
\]

Now assume that $\Psi$ attains its maximum at $(x_0,t_0)\in \overline{Q_T}$.  The proof of this theorem considers the following two cases.

\subsection{Case 1: $(x_0,t_0) \in \Gamma_T$}
Note that in this case, only the
boundary condition in \eqref{eq:angbry} is needed. Hence the result is independent of the governing equation. See the same proof for the isotropic case in \cite{GMWW}. For reader's convenience, we outline it here.

At $(x_0,t_0)$, we choose geodesic coordinates $\{x_i \}_{i=1}^{n-1}$ 
for $\partial\Omega$ and $x_n $ to be along the inner normal direction. By means of parallel transport, we will extend the tangential vectors of $\partial\Omega$ at $x_0$ along the $\{x_i \}_{i=1}^{n-1}$ and (inward) $x_n $ directions. 
Under this coordinate system, at this point, we can write 
$D_Tu=(D_{T_1} u = u_{x_1}, \ldots, D_{T_{n-1}}u=u_{x_{n-1}})$ and $D_N u=u_{x_n}$ for the tangential and normal components of $Du$. Hence $|Du|^2 = |D_Tu|^2 + |u_{x_n}|^2 $.

Notice that from \eqref{eq:h_cond}, we have $ h_{x_i}=0, \ h_{x_n} = D_N h=-1$ on \ $\partial \Omega$, so that along $\partial\Omega$, it holds that
\begin{equation*}
    W = v + u_{x_n} \cos{\theta}.
\end{equation*}
Maximum principle suggests that at $(x_0,t_0)$,
\begin{equation} \label{eq:arg_max_bd1}
    \Psi_{x_i} =0, \quad 1\leq i \leq n-1,
\end{equation}
\begin{equation}  \label{eq:arg_max_bd2}
    \Psi_{x_n} \leq0.
\end{equation}
From \eqref{eq:arg_max_bd1} together with \eqref{eq:h_cond}, we have
\begin{equation*}
    0 = \frac{v_{x_i} + u_{x_n x_i} \cos{\theta} - u_{x_n} \theta_{x_i} \sin{\theta}}{W},
\end{equation*}
and hence
\begin{equation} \label{eq:arg_bd_vi}
    v_{x_i} = - u_{x_n x_i} \cos{\theta} + u_{x_n} \theta_{x_i} \sin{\theta}.
\end{equation}
Then differentiating the boundary condition of \eqref{eq:angbry} along the tangential directions, \eqref{eq:arg_bd_vi} gives,
\begin{equation*}
    u_{x_n x_i}  = - v_{x_i} \cos{\theta} + v \theta_{x_i} \sin{\theta} = u_{x_n x_i} \cos^2 \theta  - u_{x_n} \theta_{x_i} \cos{\theta} \sin{\theta} + v \theta_{x_i} \sin{\theta},
\end{equation*}
and we obtain
\begin{equation} \label{eq:arg_bd_uni}
    u_{x_n x_i} = v \theta_{x_i} (\cos{\theta} \cot{\theta} + \csc{\theta}).
\end{equation}

We also compute the following using the curvature matrix $K=\left\{k_{ij}\right\}$ of $\partial\Omega$:
\begin{equation} \label{eq:arg_bd_vn}
\begin{split}
    v_{x_n} & = \frac{\sum_{i=1}^{n-1} u_{x_i} u_{x_i x_n} }{v} - u_{x_n x_n} \cos{\theta} \\
    & = \frac{\sum_{i=1}^{n-1} \left( u_{x_i} u_{x_n x_i} + \sum_{j=1}^{n-1} k_{ij} u_{x_i} u_{x_j} \right) }{v} - u_{x_n x_n} \cos{\theta}.
\end{split}
\end{equation}
and
\begin{eqnarray} 
    W_{x_n} & = &v_{x_n} - \left( \sum_{k=1}^n u_{x_k} h_{x_k} \cos \theta \right)_{x_n} \nonumber\\
    & = &v_{x_n} - \sum_{k=1}^{n-1} u_{x_k x_n} h_{x_k} \cos \theta - u_{x_n x_n} h_{x_n} \cos \theta - \sum_{k=1}^n u_{x_k} h_{x_k x_n} \cos \theta
    \nonumber\\
    && + \sum_{k=1}^{n-1} u_{x_k} h_{x_k} \theta_{x_n} \sin{\theta} + u_{x_n} h_{x_n} \theta_{x_n} \sin{\theta}\nonumber \\
    & = &v_{x_n} +  u_{x_n x_n} \cos \theta - \sum_{k=1}^n u_{x_k} h_{x_k x_n} \cos \theta - u_{x_n} \theta_{x_n} \sin{\theta}.\label{eq:arg_bd_wn}
\end{eqnarray}
Now we apply \eqref{eq:arg_bd_vn} and \eqref{eq:arg_bd_wn} to \eqref{eq:arg_max_bd2}, then
\begin{equation} \label{eq:ang_bd_ineq}
    0 \geq \frac{\sum_{i=1}^{n-1} \left( u_{x_i} u_{x_n x_i} + \sum_{j=1}^{n-1} k_{ij} u_{x_i} u_{x_j} \right) }{vW} -  \frac{ \sum_{i=1}^n u_{x_i} h_{x_i x_n} \cos \theta  }{W} + \theta_{x_n} \cot{\theta} - a_0.
\end{equation}
Furthermore, applying \eqref{eq:arg_bd_uni} to \eqref{eq:ang_bd_ineq}, we have
\begin{eqnarray*}
    0 &\geq &  \frac{ \sum_{i,j=1}^{n-1} k_{ij} u_{x_i} u_{x_j} }{vW} + \frac{ (\cos{\theta} \cot{\theta} + \csc{\theta}) \sum_{i=1}^{n-1} u_{x_i} \theta_{x_i}  }{W} \\
    &&-  \frac{ \sum_{i=1}^{n-1} u_{x_i} h_{x_i x_n} + u_{x_n} h_{x_n x_n} \cos \theta  }{W} + \theta_{x_n} \cot{\theta} - a_0 \\
    &\geq & \frac{k_0 |D_Tu|^2}{1+ |D_Tu|^2 } - \frac{\epsilon_1 (M_1+3)}{1- \epsilon_1^2} - a_0,
\end{eqnarray*}
where we have made use of the fact that $vW = 1+|D_Tu|^2$ due to the boundary condition in \eqref{eq:angbry}. The above result implies that for any $\alpha \leq k_0/3$ with $\epsilon_1 \leq \frac{k_0}{9(M_1+3)}$,
\begin{equation*}
    |D_Tu|^2\leq 2.
\end{equation*}
By the boundary condition, we have 
$\displaystyle
|D_Tu|^2 = \sin^2\theta(1+|Du|^2)-1.
$
Hence an upper bound of $|Du|$ is obtained (as $\sin^2\theta \geq 1-\epsilon^2_1$). \\ \\

\subsection{Case 2: $(x_0,t_0)$ is an interior point of $\overline{Q_T}$} \label{sec:ang_grad_case_2}
In this case, the maximum principle implies at this point that
\begin{equation} \label{eq:ang_int_max1}
    \Psi_{x_i} = 0,
\end{equation}
\begin{equation}  \label{eq:ang_int_max2}
    \{\Psi_{x_i x_j}\} \leq 0,\ \Psi_t \geq 0.
\end{equation}
Without loss of generality, we can assume $|Du|>1$. Then the matrix $a^{ij}$ defined in \eqref{eq:def_a} is bounded. Since $\{a^{ij}\}$ is semi-positive definite, we also have from \eqref{eq:ang_int_max2} that 
\begin{equation} \label{eq:ang_int_ineq1}
    0 \geq a^{ij} \Psi_{x_i x_j} - \Psi_t.
\end{equation}
Now we calculate the quantities in the right hand side of \eqref{eq:ang_int_ineq1}:
\begin{eqnarray*}
       \Psi_{x_i} &=& \frac{W_{x_i}}{W} + a_0 h_{x_i}, \\ 
    \Psi_{x_i x_j} &=& \frac{W_{x_i x_j}}{W} - \frac{W_{x_i} W_{x_j}}{W^2} + a_0 h_{x_i x_j}, \\
    \Psi_t &=& \frac{W_t}{W},\\
    a^{ij} \Psi_{x_i x_j} - \Psi_t &=& \frac{a^{ij} W_{x_i x_j} -W_t}{W} - a_0^2 a^{ij} h_{x_i} h_{x_j} + a_0 a^{ij} h_{x_i x_j}\\
    &=& I_1 + I_2 + I_3,
\end{eqnarray*}
where
\begin{equation}
I_1 :=  \frac{a^{ij} W_{x_i x_j} -W_t}{W}, \quad
I_2 := -  a_0^2 \ a^{ij} h_{x_i} h_{x_j}, \quad
I_3 := a_0 \ a^{ij} h_{x_i x_j}.
\end{equation}
Hence \eqref{eq:ang_int_ineq1} gives
\begin{equation}\label{0I1I2I3}
0 \geq I_1 + I_2 + I_3 .
\end{equation}

To continue, we recall the notations introduced in Section 2.1. Then by the property of $h$, $I_2$ and $I_3$ can be bounded from below by:
\begin{equation}\label{angI2est}
    I_2 \geq - a_0^2 \big(\max_{\alpha} \tau^{\alpha \alpha} + \tau^{n n}\big),
\end{equation}
\begin{equation}\label{angI3est}
    I_3 \geq a_0 k_1 \big(\sum_{\alpha} \tau^{\alpha \alpha} + \tau^{n n}\big).
\end{equation}

Now we focus on $I_1$. 
For this, we calculate the following expressions.
\begin{eqnarray*}
     W_{x_i} & = & u_{x_k x_i} \left( \frac{u_{x_k}}{v} - h_{x_k} \cos \theta \right)  - u_{x_k} h_{x_k x_i} \cos \theta + u_{x_k} h_{x_k} \theta_{x_i} \sin \theta  \\
    & = &  S_k u_{x_k x_i} - u_{x_k} h_{x_k x_i} \cos \theta + u_{x_k} h_{x_k} \theta_{x_i} \sin \theta , \\
    W_t & = &  u_{x_k,t} \left( \frac{u_{x_k}}{v} - h_{x_k} \cos \theta \right) = S_k u_{x_k,t}, \\
    W_{x_i x_j} & = &  v_{x_i x_j} - \left( u_{x_k} h_{x_k} \cos \theta \right)_{x_i x_j} \\
    & = &  \frac{u_{x_k x_i} u_{x_k x_j}}{v} - \frac{u_{x_k} u_{x_k x_i} u_{x_l} u_{x_l x_j}}{v^3} + \frac{u_{x_k} u_{x_k x_i x_j}}{v}  - u_{x_i x_j x_k} h_{x_k} \cos \theta \\
    & & - 2 u_{x_k x_j} h_{x_k x_i} \cos \theta - u_{x_k} h_{x_k x_i x_j} \cos \theta - 2 (u_{x_k} h_{x_k})_{x_i} (\cos \theta)_{x_j} - u_{x_k} h_{x_k} (\cos \theta)_{x_i x_j}.
\end{eqnarray*}
By differentiating the governing equation in \eqref{eq:angbry}, we obtain
\begin{equation*}
    u_{x_k,t} = (a^{ij})_{x_k} u_{x_i x_j} + a^{ij} u_{x_i x_j x_k}.
\end{equation*}
Then we compute $a^{ij} W_{x_i x_j} - W_t$:
\begin{equation}
\label{eq:ang_I1}
    a^{ij} W_{x_i x_j} - W_t
    = J_1 + J_2 + J_3 + J_4,
\end{equation}
where
\begin{eqnarray*}
    J_1 &  := &   \frac{1}{v} a^{ij} u_{x_j x_l} \left( \delta_{kl} - \frac{u_{x_k} u_{x_l}}{v^2} \right) u_{x_k x_i}, \\
    J_2 & := &  - (a^{ij})_{x_k} u_{x_i x_j} \left(\frac{u_{x_k}}{v} - h_{x_k} \cos \theta\right), \\
    J_3 & := & - a^{ij} u_{x_k} \big(h_{x_i x_j x_k} \cos \theta - 2 h_{x_k x_i} (\sin \theta )\theta_{x_j} - h_{x_k} (\cos \theta )\theta_{x_i} \theta_{x_j} - h_{x_k} (\sin\theta) \theta_{x_i x_j}\big), \\
    J_4 & := &  - 2 a^{ij} u_{x_k x_i} \big(h_{x_k x_j} \cos \theta - h_{x_k} (\sin \theta) \theta_{x_j}\big).
\end{eqnarray*}
Hence
\[
I_1 = \frac{1}{W}(J_1 + J_2 + J_3 + J_4). 
\]
We will analyze  $J_i$'s in the order of: $J_3, J_4, J_1$ and $J_2$. 

For $J_3$, it can simply be estimated by
\begin{equation} \label{eq:ang_J3_est}
    J_3 \geq - C \Big(|\cos{\theta}| + |D\theta| + |D^2 \theta|\Big) |Du|,
\end{equation}
where $C$ depends on the upper bound of $a^{ij}$ and $h$.

Next we apply the orthonormal basis we have chosen in Section \ref{Sec:2.2} for 
$\mathbb{R}^n$ and $n\times n$ symmetric matrices. To simplify notations, we 
introduce
\begin{equation*}
    A := \{ a^{ij} \}, \quad G = \{ g^{ij} \} := \left\{ \delta_{ij} - \frac{u_{x_i} u_{x_j}}{v^2} \right\}, \,\,\,
    H := \Big\{ h_{x_i x_j} \cos \theta - h_{x_i} (\sin \theta)\theta_{x_j} \Big\}.
\end{equation*}
The following are the computational formulas for $G$ and $A$ on basis elements: 
\begin{eqnarray*}
     G \Phi^{\alpha \alpha} & = & \Phi^{\alpha \alpha}, \\
     G \Phi^{\alpha \beta} & = & \Phi^{\alpha \beta}, \\
     G \Phi^{\alpha n} & = & \frac{1}{\sqrt{2}} \left( \phi^{\alpha} (\phi^{n})^T + \frac{1}{v^2} \phi^{n} (\phi^{\alpha})^T \right), \\
     G \Phi^{n n} & = & \frac{1}{v^2} \Phi^{n n}, \\
     \text{tr}(A \Phi^{\alpha \alpha}) & = & \tau^{\alpha \alpha}, \\
     \text{tr}(A \Phi^{\alpha \beta}) & = & \sqrt{2} \tau^{\alpha \beta}, \\
     \text{tr}(A \Phi^{\alpha n}) & = & \sqrt{2} \tau^{\alpha n}, \\
     \text{tr}(A \Phi^{n n}) & = & \tau^{n n}. 
\end{eqnarray*}

For $J_4$, we write it as,
\begin{equation*}
    J_4 = - 2 a^{ij} u_{x_k x_i} \left(h_{x_k x_j} \cos \theta - h_{x_k} (\sin \theta )\theta_{x_j} \right) = - 2 \ \text{tr}(A \ D^2 u \ H ).
\end{equation*}
Notice that $\displaystyle D\Psi = \frac{a_0W\ Dh - HDu + D^2u \ S}{W}$
where
\begin{equation*}
     S:= \frac{Du}{v} - Dh\cos\theta,\,\,\,\text{i.e.}\,\,\,
     S_k  := \frac{u_{x_k}}{v} - h_{x_k} \cos \theta.
\end{equation*}
Hence \eqref{eq:ang_int_max1} $D\Psi = 0$ can be written as
\begin{equation}
    D^2u \  S = -a_0 W \ Dh + H \ Du,
\end{equation}
which implies 
\begin{equation} \label{eq:ang_int_max1_r}
\begin{split}
    &  (\phi^{\alpha})^T \  D^2 u \ S = -a_0 W \ (\phi^{\alpha})^T \ Dh + (\phi^{\alpha})^T \ H \ Du, \\ 
    &  (\phi^n)^T \  D^2 u \ S = -a_0 W \ (\phi^n)^T \ Dh + (\phi^n)^T \ H \ Du.
\end{split}
\end{equation}

To continue the estimate on $J_4$, we define the following 
\begin{equation} \label{eq:ang_rho_def}
\begin{split}
    & \rho^{\alpha} := (\phi^{\alpha})^T \ S = -\big((\phi^\alpha)^TDh\big)\cos\theta \,\,\, \\
    &\rho^n := (\phi^n)^T \ S = 
    \frac{|Du|}{v} - \frac{(Du)^TDh}{|Du|}\cos\theta, 
\end{split}
\end{equation}
\begin{equation}  \label{eq:ang_eta_def}
\begin{split}
    & \eta^{\alpha} :=  (\phi^{\alpha})^T \  D^2 u \ S = \gamma^{\alpha \alpha} \rho^{\alpha} + \frac{\gamma^{\alpha n}}{\sqrt{2}} \rho^n, \\
    & \eta^n := (\phi^n)^T \  D^2 u \ S = \sum_{\alpha}  \frac{\gamma^{\alpha n}}{\sqrt{2}} \rho^{\alpha} + \gamma^{n n} \rho^n.
\end{split}
\end{equation}
Therefore
\begin{equation} \label{eq:ang_D2u_S}
    D^2 u \ S = \sum_{\alpha} \eta^{\alpha} \phi^{\alpha} + \eta^n \phi^n.
\end{equation}
Since $|Du| \geq 1$, we have:
\begin{equation}
    \left|\rho^{\alpha} \right| \leq | \cos{\theta}| \quad \text{and} \quad \frac{1}{4} \leq \rho^n \leq 2.
\end{equation}
Note that \eqref{eq:ang_int_max1_r}, \eqref{eq:ang_eta_def} and \eqref{eq:ang_D2u_S} suggest a way to express $\gamma^{\alpha n},\ \gamma^{n n}$ by $\gamma^{\alpha \alpha}$:
\begin{equation*}
\begin{split}
    & \gamma^{\alpha n} = - \frac{-\sqrt{2} \rho^{\alpha}}{\rho^n} \gamma^{\alpha \alpha} + O(1) a_0 W + O\big(|\cos{\theta} + |D\theta||\big) |Du|, \\
    & \gamma^{n n} = \sum_{\alpha} \frac{(\rho^{\alpha})^2}{(\rho^n)^2} \gamma^{\alpha \alpha} + O(1) a_0 W + O\big(|\cos{\theta} + |D\theta||\big) |Du|.
\end{split}
\end{equation*}
Then 
\begin{equation} \label{eq:ang_J4_est}
\begin{split}
    J_4 \geq & -2 \| H \|_{L^{\infty}} \left( \sum_{\alpha} \gamma^{\alpha \alpha} \text{tr}(A\Phi^{\alpha \alpha}) +  \sum_{\alpha} \gamma^{\alpha n} \text{tr}(A\Phi^{\alpha n}) + \gamma^{n n} \text{tr}(A\Phi^{n n}) \right) \\
    \geq & - C \left( \sum_{\alpha} \gamma^{\alpha \alpha} \tau^{\alpha \alpha} +  \sum_{\alpha} \gamma^{\alpha n} \sqrt{2} \tau^{\alpha n} + \gamma^{n n} \tau^{n n}  \right) \\
    = & - C \left(  \sum_{\alpha} \gamma^{\alpha \alpha} \left( \tau^{\alpha \alpha} - \frac{2\rho^{\alpha}}{\rho^n} \tau^{\alpha n} + \frac{(\rho^{\alpha})^2}{(\rho^n)^2} \tau^{n n} \right)\right. \\
    & \quad\quad\quad\left.+ \Big(O(1) a_0 W + O\big(|\cos{\theta} + |D\theta||\big) |Du| \Big) \Big(\sum_{\alpha} \sqrt{2} \tau^{\alpha n} + \tau^{n n}\Big)  \right) .
\end{split}
\end{equation} 

For $J_1$, we write
\begin{equation}
    J_1 = \frac{1}{v} \text{tr}(A\ D^2 u\ G\ D^2 u).
\end{equation}
Consider the bilinear form on $n \times n$ symmetric matrices,
\begin{equation*}
    \mathcal{A}(\cdot, \cdot) : = \text{tr}(A \ \cdot \  G\ \cdot \ ).
\end{equation*}
Evaluating $\mathcal{A}$ on the basis vectors $\Phi^{\alpha\beta}$ gives:
\begin{eqnarray*}
         \mathcal{A} (\Phi^{\alpha \alpha}, \Phi^{\alpha \alpha} ) & = &  \tau ^{\alpha \alpha}, \\
         \mathcal{A} (\Phi^{\alpha n}, \Phi^{\alpha n}) & = & \frac{1}{2} (\frac{1}{v^2} \tau ^{\alpha \alpha} + \tau^{nn} ), \\
         \mathcal{A} (\Phi^{n n}, \Phi^{n n}) & = & \frac{1}{v^2} \tau^{n n},\\
     \mathcal{A} (\Phi^{n n}, \Phi^{\alpha n} ) & = & \frac{1}{\sqrt{2}} \frac{1}{v^2} \tau^{\alpha n}, \\
     \mathcal{A} (\Phi^{\alpha n}, \Phi^{\beta n} ) & = & \frac{1}{2} \frac{1}{v^2} \tau^{\alpha \beta}, \\
     \mathcal{A} (\Phi^{\alpha \alpha}, \Phi^{\alpha n} ) & = & \frac{1}{\sqrt{2}} \tau^{\alpha n}, \\
\end{eqnarray*}
and $\mathcal{A}$ vanishes in all the remaining cases. Now
\begin{equation} \label{eq:ang_J1_est}
\begin{split}
    J_1  = & \ \frac{1}{v} \mathcal{A}(D^2 u, D^2 u) \\
     =  & \ \frac{1}{v} \Bigg( \sum_{\alpha} (\gamma^{\alpha \alpha})^2 \tau^{\alpha \alpha} +  \sum_{\alpha} \frac{1}{2}(\gamma^{\alpha n})^2  (\frac{1}{v^2} \tau^{\alpha \alpha} + \tau^{nn}) + \frac{1}{v^2} (\gamma^{n n})^2 \tau^{n n} \\
    & \ \sum_{\alpha} \sqrt{2} \gamma^{\alpha \alpha} \gamma^{\alpha n} \tau^{\alpha n} + \sum_{\alpha \neq \beta} \frac{1}{2} \frac{1}{v^2} \gamma^{\alpha n} \gamma^{\beta n} \tau^{\alpha \beta} + \sum_{\alpha}  \sqrt{2} \gamma^{\alpha n} \gamma^{n n} \tau^{\alpha n} \Bigg).
\end{split}
\end{equation}

Finally we consider $J_2$. Recall the tensor $T^3$ defined in Section \ref{Sec:2.2},
\begin{equation}
\begin{split}
    J_2 & = - (a^{ij})_{x_k} u_{x_i x_j} \left(\frac{u_{x_k}}{v} - h_{x_k} \cos \theta \right) \\
    & = -D_{p_l} \left(|p| D^2_{p_i p_j} F \right)\Big|_{(Du,-1)} u_{x_i x_j} u_{x_k x_l} \left(\frac{u_{x_k}}{v} - h_{x_k} \cos \theta \right) \\
    & = T^3 (D^2 u , D^2 u\,S).
\end{split}
\end{equation}
Then we can apply the result \eqref{eq:T3_cal1} and \eqref{eq:T3_cal2} with $V = D^2 u\,S$.

We claim that Theorem \ref{thm:3.1} will be proved if  following statement is true:
\begin{equation}\label{eq:ang_I1_res}
J_1 + J_2 + J_3 + J_4 \geq -C(\theta) v.  
\end{equation}
To see this, note that
\begin{equation*}
    I_1 = \frac{J_1 + J_2 + J_3 + J_4}{W} \geq -C(\theta).
\end{equation*}
Combining the above with the estimates \eqref{angI2est} and \eqref{angI3est} for $I_2$ and $I_3$, we obtain by \eqref{0I1I2I3} that
\begin{equation*}
    0 \geq I_1 + I_2 + I_3 \geq -C(\theta) - a_0^2 \left(\max_{\alpha} \tau^{\alpha \alpha} + \tau^{n n} \right) + a_0 k_1 \left(\sum_{\alpha} \tau^{\alpha \alpha} + \tau^{n n} \right).
\end{equation*}
Recall that $\tau^{\alpha \alpha}$ and $\tau^{n n}$ satisfy \eqref{eq:F_cond1}. By taking $a_0 =\min \{\frac{k_1}{2},  \frac{k_0}{3}, 1 \}$ and $\epsilon_1$ sufficiently small, we have
\begin{equation*}
    v \leq C,
\end{equation*}
where $C$ is independent of $T$ leading to the main statement \eqref{eq:ang_grad.est} of the Theorem.

\subsection{Proof of \eqref{eq:ang_I1_res}}
We first decompose its left hand side as
\begin{eqnarray}
J_1 + J_2 + J_3 + J_4 
&=& 
\frac{1}{10} \frac{1}{v}  \sum_{\alpha} (\gamma^{\alpha \alpha})^2 \tau^{\alpha \alpha} + J_3 + J_4 \\
&& 
+ J_1 - \frac{1}{10} \frac{1}{v}  \sum_{\alpha} (\gamma^{\alpha \alpha})^2 \tau^{\alpha \alpha} +J_2,
\end{eqnarray}
We claim that 
     \begin{equation} \label{eq:ang_cl1}
         \frac{1}{10} \frac{1}{v}  \sum_{\alpha} (\gamma^{\alpha \alpha})^2 \tau^{\alpha \alpha} + J_3 + J_4 \geq -Cv.
     \end{equation}
and

     \begin{equation} \label{eq:ang_cl2}
    J_1 - \frac{1}{10} \frac{1}{v}  \sum_{\alpha} (\gamma^{\alpha \alpha})^2 \tau^{\alpha \alpha} +J_2 \geq 0.
   \end{equation}
   
Recall that from \eqref{eq:ang_J3_est} and \eqref{eq:ang_J4_est}, $J_3$ and $J_4$ are bounded 
from below by a linear form of $\gamma^{\alpha \alpha}$ for $1\leq \alpha \leq n-1$. Then \eqref{eq:ang_cl1} follows naturally by the fact that $\sum_{\alpha} (\gamma^{\alpha \alpha})^2 \tau^{\alpha \alpha}$ is a positive quadratic form on $\gamma^{\alpha \alpha}$.

For the claim \eqref{eq:ang_cl2}, we analyze its left hand side by 
means of \eqref{eq:ang_J1_est}. Specifically, we write it as a bilinear form  of
$\gamma = (\gamma^{11}, \cdots, \gamma^{n-1,n-1}, \gamma^{1n}, \cdots, \gamma^{n-1,n}, \gamma^{nn}  )^T$. To this end, we have
\begin{equation}
\begin{split}
  \,\,\,&  J_1 - \frac{1}{10} \frac{1}{v}  \sum_{\alpha} (\gamma^{\alpha \alpha})^2 \tau^{\alpha \alpha} +J_2 \\
  = \,\,\,&  \sum_{\alpha} \left( \frac{9}{10v}  \tau^{\alpha \alpha} + \rho^{\alpha} T^3 (\phi^{\alpha}, \phi^{\alpha}, \phi^{\alpha}) \right)   (\gamma^{\alpha \alpha})^2  \\
  & + \sum_{\alpha \neq \beta}  \rho^{\beta} T^3 (\phi^{\alpha}, \phi^{\alpha}, \phi^{\beta})    \gamma^{\alpha \alpha} \gamma^{\beta \beta} \\
  & + \sum_{\alpha}    \left( \rho^n T^3 (\phi^{\alpha}, \phi^{n}, \phi^{\alpha}) + \rho^{\alpha} T^3 (\phi^{\alpha}, \phi^n, \phi^n) + \frac{1}{2v^3} \tau^{\alpha \alpha} +  \frac{1}{2v} \tau^{nn} \right) (\gamma^{\alpha n})^2 \\
  & + \sum_{\alpha \neq \beta}  \left( \rho^n  T^3 (\phi^{\alpha}, \phi^{n}, \phi^{\beta}) + \rho^{\beta} T^3 (\phi^{\alpha}, \phi^n, \phi^n) + \frac{1}{2v^3}  \tau^{\alpha \beta}  \right)  \gamma^{\alpha n} \gamma^{\beta n} \\
  & + \sum_{\alpha}  \left( \frac{\sqrt{2}}{v}  \tau^{\alpha n}  + \frac{\rho^n}{\sqrt{2}}  T^3 (\phi^{\alpha}, \phi^{\alpha}, \phi^{\alpha}) + \frac{\rho^{\alpha}}{\sqrt{2}} T^3 (\phi^{\alpha}, \phi^{\alpha}, \phi^{n}) + \sqrt{2} \rho^{\alpha}  T^3 (\phi^{\alpha}, \phi^{n}, \phi^{\alpha}) \right) \gamma^{\alpha \alpha} \gamma^{\alpha n} \\
  & + \sum_{\alpha \neq \beta}  \left( \frac{\rho^n}{\sqrt{2}}  T^3 (\phi^{\alpha}, \phi^{\alpha}, \phi^{\beta}) + \frac{\rho^{\beta}}{\sqrt{2}} T^3 (\phi^{\alpha}, \phi^{\alpha}, \phi^{n}) +  \sqrt{2} \rho^{\alpha}  T^3 (\phi^{\beta}, \phi^{n}, \phi^{\alpha})  \right) \gamma^{\alpha \alpha} \gamma^{\beta n} \\
  & + \left( \rho^n  T^3 (\phi^{n}, \phi^{n}, \phi^{n})  + \frac{1}{v^3}  \tau^{n n}  \right)  (\gamma^{n n})^2 \\
  & + \sum_{\alpha} \left( \sqrt{2}  \rho^n  T^3 (\phi^{\alpha}, \phi^n, \phi^n) + \frac{\rho^n}{\sqrt{2}} T^3 (\phi^{n}, \phi^{n}, \phi^{\alpha}) +  \frac{\rho^{\alpha}}{\sqrt{2}} T^3 (\phi^{n}, \phi^{n}, \phi^{n}) + \sqrt{2} \tau^{\alpha n}  \right) \gamma^{\alpha n} \gamma^{n n}  \\
  & + \sum_{\alpha} \left(  \rho^n  T^3 (\phi^{\alpha}, \phi^{\alpha}, \phi^{n}) + \rho^{\alpha} T^3 (\phi^{n}, \phi^{n}, \phi^{\alpha})   \right) \gamma^{\alpha \alpha} \gamma^{n n} \\
  :=\,\,\, & \mathcal{B}(\gamma, \gamma),
\end{split}
\end{equation}
where the symmetric bilinear form $\mathcal{B}$ has the following matrix representation
(recall that $1\leq\alpha, \beta \leq n-1$):
\begin{equation} \label{eq:B_matrep}
\mathcal{B} = \left(
\begin{array}{cc|cc|c}
B^{\alpha\alpha, \alpha\alpha} &  B^{\alpha\alpha,\beta\beta}  &  B^{\alpha\alpha,\alpha n}  & B^{\alpha\alpha,\beta n} & B^{\alpha\alpha,n n}  \\
B^{\beta\beta,\alpha\alpha} &  B^{\beta\beta,\beta\beta}  & B^{\beta \beta,\alpha n} & B^{\beta \beta,\beta n} & B^{\beta \beta, n n} \\
\hline
 B^{\alpha n, \alpha\alpha} & B^{\alpha n,\beta\beta} &   B^{\alpha n, \alpha n}  &   B^{\alpha n,\beta n}   &  B^{\alpha n,nn}   \\
 B^{\beta n, \alpha \alpha}& B^{\beta n,\beta\beta}  &   B^{\beta n,\alpha n}   &  B^{\beta n,\beta n}  &  B^{\beta n,nn}   \\
\hline
 B^{nn, \alpha\alpha} & B^{nn,\beta\beta}   & B^{nn,\alpha n}   &  B^{n n, \beta n}  & B^{nn,nn}
\end{array}
\right).
\end{equation}
We only need to check that $\mathcal{B}(\cdot,\cdot)$ is semi-positive definite when $\epsilon_1,\epsilon_2$ are sufficiently small. 

Note that the entries of $\mathcal{B}$ depend on $\tau^{(\cdot, \cdot)}$, $T^3(\cdot, \cdot, \cdot)$ and $\rho^{(\cdot)}$. They consist of terms that can be bounded from above or below. This is where we crucially make use of the estimates in Section \ref{sec:2.1}, in particular Lemma \ref{lem:2.4} for $T^3(\cdot, \cdot, \cdot)$, \eqref{eq:F_cond1}--\eqref{eq:F_cond3} for $\tau^{(\cdot, \cdot)}$ and \eqref{eq:ang_rho_def} for $\rho^{(\cdot)}$. To be precise, 
\begin{enumerate}
\item 
the \emph{diagonal} entries of $\mathcal{B}$ consist of only the following types of terms:
\[
B^{\alpha\alpha,\alpha\alpha}\,\,(B^{\beta\beta,\beta\beta}), 
\ B^{\alpha n,\alpha n}\,\,(B^{\beta n,\beta n}), 
\ B^{nn,nn}.
\]
They are dominated by the following terms shown with their respective lower bounds:
\begin{eqnarray*}
\tau^{\alpha\alpha} \geq c_1,\,\,\,
T^3(\phi^\alpha,\phi^n,\phi^\alpha)\geq \frac{c_1}{v},\,\,\,
T^3(\phi^n,\phi^n,\phi^n)\geq \frac{c_1}{v^3},\,\,\,
\rho^n \geq \frac14.
\end{eqnarray*}
From the above, we infer that there exists a positive constant $c_1'=c_1'(\theta,c_1,c_2)$ such that
\begin{equation*}
    B^{\alpha\alpha,\alpha\alpha}, \ B^{\alpha n,\alpha n} \geq \frac{c_1'}{v}, \quad B^{nn,nn} \geq \frac{c_1'}{v^3}.
\end{equation*}
\item 
the \emph{non-diagonal} entries of $\mathcal{B}$ consist of the following type of terms:
\[
    B^{\alpha\alpha,\beta\beta}, \ B^{\alpha \alpha, \alpha n} , \ B^{\alpha \alpha,\beta n}, \  B^{\alpha n,\beta n}, \quad \text{and}\quad
    B^{\alpha \alpha, n n} , \  B^{\alpha n, n n}.
\]
They are bounded from above by the following terms shown again with their respective
upper bounds:
\[
|\tau^{\alpha\beta}|\leq c_2,\,\,\,
|\tau^{\alpha n}|\leq\frac{c_2}{v^2}
\]\[
\left|T^3(\phi^\alpha, \phi^\alpha, \phi^\beta)\right|,\,\,\,
\left|T^3(\phi^\alpha, \phi^n, \phi^\beta)\right|\,\,\leq \frac{c_2}{v},
\]\[
\left|T^3(\phi^\alpha, \phi^\alpha, \phi^n)\right|,\,\,\,
\left|T^3(\phi^\alpha, \phi^n, \phi^n)\right|,\,\,\,
\left|T^3(\phi^n, \phi^n, \phi^\alpha)\right|\,\,
\leq \frac{c_2}{v^3},
\]\[
\left|\rho^\alpha\right| \leq |\cos\theta|.
\]
Then we can similarly infer that there exists a positive constant $c_2'=c_2'(\theta,c_1,c_2)$ such that
\[
B^{\alpha\alpha,\beta\beta}, \ B^{\alpha \alpha, \alpha n} , \ B^{\alpha \alpha,\beta n}, \  B^{\alpha n,\beta n} \leq \frac{c_2'}{v}, 
\quad \text{and}\quad
    B^{\alpha \alpha, n n} , \  B^{\alpha n, n n} \leq \frac{c_2'}{v^3}
    \,\,\,\text{for $\alpha\neq\beta$.}
\]
\end{enumerate}
Hence we conclude that for $\epsilon_1$ small with $|\cos\theta|\leq \epsilon_1$
and $\epsilon_2=\frac{c_2}{c_1}$ small, the bilinear form $\mathcal{B}(\cdot,\cdot)$ is semi-positive definite. Therefore, \eqref{eq:ang_cl2} holds. 

The above completes the proof of the Theorem.
\hfill$\square$

\subsection{Elliptic problem and long time behavior}\label{sec:longtimeangle}
In order to analyze the long time behavior of \eqref{eq:angbry},  we consider the elliptic version of the boundary value problem \eqref{eq:elang_pro} as introduced in Section \ref{sec:1},
\begin{equation} \label{eq:ang_el_pro}
\left\{ \begin{array}{cllcl}
     \lambda & = & \sqrt{1+|Dw|^2} \ D^2_{p_i p_j} F (Dw,-1) w_{x_i x_j}    
     \quad & \text{in} & \ \Omega,  \\
     D_N w & = & - \sqrt{1+|Dw|^2} \cos \theta \quad & \text{on} & \ \partial \Omega ,
\end{array} \right.
\end{equation}
and prove that there exists a unique $\lambda$ corresponding to $\theta$ such that a solution to \eqref{eq:ang_el_pro} exists and it is unique. This solution naturally leads to a translating solution to \eqref{eq:angbry} given by 
$\Tilde{w}(x,t) = w(x) + \lambda t$. Furthermore, we show that any solution to \eqref{eq:angbry} converges, up to a constant, to this translating solution as $t \rightarrow \infty$.
\begin{theorem} \label{thm:3.3}
    Assume {\bf A1} -- {\bf A4} and 
$\epsilon_1$, $\epsilon_2$ are sufficiently small. Then there exists a unique $\lambda$ and a corresponding smooth solution $w$ for \eqref{eq:ang_el_pro}. The function $w$ is unique up to a constant. 
\end{theorem}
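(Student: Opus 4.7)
The plan is to obtain $(\lambda,w)$ as the limit of a family of strictly monotone regularized elliptic problems, and then to establish uniqueness by viewing the two candidate solutions as traveling waves of the parabolic problem \eqref{eq:angbry} and invoking parabolic comparison.

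\textbf{Existence.} For each $\varepsilon>0$, I would consider the regularized elliptic problem
\begin{equation*}
\varepsilon w_\varepsilon \;=\; \sqrt{1+|Dw_\varepsilon|^2}\, D^2_{p_ip_j}F(Dw_\varepsilon,-1)(w_\varepsilon)_{x_ix_j}\ \text{in}\ \Omega,\quad
D_N w_\varepsilon \;=\; -\sqrt{1+|Dw_\varepsilon|^2}\cos\theta\ \text{on}\ \partial\Omega.
\end{equation*}
The extra $\varepsilon w_\varepsilon$ term breaks the constant-shift degeneracy of the unregularized operator; by standard quasilinear elliptic theory with oblique boundary conditions (continuity method), a smooth solution $w_\varepsilon$ exists once a priori estimates are available. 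The gradient estimate of Theorem \ref{thm:3.1} adapts essentially without change: differentiating the equation in $x_k$ replaces the parabolic quantity $u_{x_k,t}$ by the bounded elliptic quantity $\varepsilon(w_\varepsilon)_{x_k}$, so the proof of \eqref{eq:ang_I1_res} goes through and produces a gradient bound uniform in $\varepsilon$. For a uniform bound $|\varepsilon w_\varepsilon|\le C$, I evaluate the equation at an interior extremum of $w_\varepsilon$, using that $[D^2_{p_ip_j}F]\ge 0$, and rule out or control boundary extrema by the contact-angle condition together with barriers built from the function $h$ of \eqref{eq:h_cond} and the strict convexity of $\Omega$ (assumption \textbf{A1}). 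Fixing $x_0\in\Omega$ and setting $\widetilde w_\varepsilon := w_\varepsilon - w_\varepsilon(x_0)$, Schauder estimates give uniform $C^{2,\alpha}(\overline\Omega)$ bounds on $\widetilde w_\varepsilon$; a subsequence then produces $\varepsilon_k w_{\varepsilon_k}(x_0)\to\lambda$ and $\widetilde w_{\varepsilon_k}\to w$ in $C^2(\overline\Omega)$, with $(\lambda,w)$ solving \eqref{eq:ang_el_pro}.

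\textbf{Uniqueness.} Suppose $(\lambda_i,w_i)$, $i=1,2$, are two solution pairs. Since the operator and the boundary condition are invariant under constant shifts of the dependent variable, $\widetilde w_i(x,t):=w_i(x)+\lambda_i t$ are classical solutions of the parabolic problem \eqref{eq:angbry}. The parabolic comparison principle for contact-angle problems (as used in the parallel work \cite{CY1}) yields
\begin{equation*}
\sup_x\bigl(\widetilde w_1(x,t)-\widetilde w_2(x,t)\bigr)\ \le\ \sup_x\bigl(w_1(x)-w_2(x)\bigr)\quad \text{for all }t\ge 0.
\end{equation*}
Since $\widetilde w_1-\widetilde w_2 = (w_1-w_2)+(\lambda_1-\lambda_2)t$, linear growth of the left side when $\lambda_1\neq\lambda_2$ gives a contradiction, forcing $\lambda_1=\lambda_2$. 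With $\lambda$ fixed, $v:=w_1-w_2$ satisfies a linear elliptic equation (obtained by subtracting the two equations and applying the mean value theorem along the segment $s\mapsto (1-s)w_2+sw_1$, using the convexity of $F$) with a linearized oblique boundary condition whose obliqueness coefficient works out to $\sin^2\theta>0$. The strong maximum principle together with the Hopf lemma then forces $v$ to be constant.

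\textbf{Main obstacle.} The delicate step is the uniform $C^0$ control on $\varepsilon w_\varepsilon$: the gradient estimate of Theorem \ref{thm:3.1} controls oscillations but not absolute values, and the possibility that $\cos\theta$ changes sign on $\partial\Omega$ means that a boundary extremum of $w_\varepsilon$ cannot be excluded a priori on sign grounds alone. Handling this requires a quantitative barrier construction that exploits both the strict convexity of $\Omega$ and the smallness of $\epsilon_1$ to absorb the sign-changing boundary contribution. The remaining ingredients---the regularized gradient bound and the uniqueness argument---are essentially routine adaptations of Theorem \ref{thm:3.1} and standard parabolic comparison.
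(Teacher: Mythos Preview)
Your proposal is correct and follows essentially the same route as the paper: regularize by the eigenvalue problem $\varepsilon w_\varepsilon = a^{ij}(Dw_\varepsilon)(w_\varepsilon)_{x_ix_j}$, obtain a uniform bound on $\varepsilon w_\varepsilon$ by a barrier argument, adapt the gradient estimate of Theorem~\ref{thm:3.1}, and pass to the limit; uniqueness is then handled by maximum principle and the Hopf lemma. One small sharpening worth noting: in the gradient estimate, the extra elliptic term is not merely ``bounded'' but in fact has a favorable sign---the paper observes $J_2' = J_2 + \varepsilon (Dw^\varepsilon)^T S \ge J_2$ when $|Dw^\varepsilon|\ge 1$ and $|\cos\theta|$ is small---which makes the uniformity in $\varepsilon$ immediate.
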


\begin{proof}
We consider the following eigenvalue problem firstly: 
\begin{equation} \label{eq:ang_el_eigpro}
\left\{ \begin{array}{cllcl}
     \epsilon w^{\epsilon} & = & \sqrt{1+|Dw^{\epsilon}|^2} \ D^2_{p_i p_j} F (Dw^{\epsilon},-1) w^{\epsilon}_{x_i x_j}    \quad & \text{in} & \ \Omega,  \\
     D_N w^{\epsilon} & = & - \sqrt{1+|Dw^{\epsilon}|^2} \cos \theta \quad & \text{on} & \ \partial \Omega.
\end{array} \right.
\end{equation}
A barrier argument as in \cite{AW2} shows $\epsilon w^{\epsilon}$ is uniformly bounded -- see also the proof in \cite{GMWW}. We demonstrate below that a uniform gradient estimate to $D w^{\epsilon}$ can be established following the same steps as in Theorem \ref{thm:3.1}.  

Define as before the function $\Psi = \log W + a_0h$
where  $W = \sqrt{1+|Dw^{\epsilon}|^2} - w^{\epsilon}_{x_k} h_{x_k} \cos \theta $. Then similar to Theorem 3.1, the maximum of $\Psi(x)$ can occur on the boundary or in the interior. The boundary case is identical to {\bf Case 1} in the proof of Theorem 3.1. The interior case is also similar as outlined below.
At the maximum point, the maximum principle suggests 
\begin{equation*}
    0 \geq a^{ij} \Psi_{x_i x_j} = \frac{a^{ij} W_{x_i x_j}}{W}  - a_0^2 a^{ij} h_{x_i} h_{x_j} + a_0 a^{ij} h_{x_i x_j}.
\end{equation*}
We also claim 
\begin{equation*}
    \frac{a^{ij} W_{x_i x_j}}{W} \geq -C.
\end{equation*}
This is shown by differentiating the governing equation in \eqref{eq:ang_el_pro}
and substituting the third order derivative of $w^{\epsilon}$ in $\Psi_{x_i x_j}$
so that we can arrive at
\begin{equation*}
    \frac{a^{ij} W_{x_i x_j}}{W} = \frac{1}{W} (J_1 + J_2' + J_3 + J_4)
\end{equation*}
where $J_1, J_3, J_4$ are exactly the same as in \eqref{eq:ang_I1} by substituting $u$ with $w^{\epsilon}$, but 
\begin{equation*}
    J_2' = J_2 + \epsilon (Dw^{\epsilon})^T S
    =J_2 + \epsilon (Dw^{\epsilon})^T\left(\frac{Dw^\epsilon}{\sqrt{1+|Dw^\epsilon|^2}}-Dh\cos\theta\right)\geq J_2,
\end{equation*}
by the assumption that $|Dw^{\epsilon}| \geq 1$ and $\cos\theta$ is small. With these, we can apply the same process as in the interior case to obtain the gradient estimate of $w^{\epsilon}$, $|Dw^\epsilon| \leq C$.

The uniform gradient estimate implies $|D(\epsilon w^{\epsilon})| \rightarrow 0$ 
and hence $\epsilon w^{\epsilon}$ converges to some constant $\lambda$. 
The uniqueness of $\lambda$ follows from maximum principle and the Hopf Lemma as in \cite{AW1}.
\end{proof}

From Theorem \ref{thm:3.3}, for a solution $w(x)$ to the elliptic boundary problem \eqref{eq:ang_el_pro}, clearly $\Tilde{w}(x,t) = w(x) + \lambda t$ solves the following parabolic boundary problem:
\begin{equation} \label{eq:ang_transol}
    \left\{ \begin{array}{cllcl}
            u_t & = & \sqrt{1+|Du|^2} \  D^2_{p_i p_j} F (Du,-1) u_{x_i x_j}   \quad &  \text{in} & \ {Q}_T , \\
            D_N u & = & -  \cos \theta \sqrt{1+|Du|^2}  \quad &  \text{on} & \ \Gamma_T ,  \\
         u(\cdot,0) & = & w(\cdot)  \quad \ & \text{in} & \ \Omega_0.
    \end{array}  \right.
\end{equation}
The only difference to \eqref{eq:angbry} is the initial data.

Now we state our final results describing the asymptotic behavior of our solution $u$.  
Their proofs are basically the same as those in \cite{AW2}, \cite{GMWW}, and 
our parallel work \cite{CY1}. Hence we omit their proofs.
\begin{corollary} \label{cor:3.4}
    For any solution $u(x,t)$ to \eqref{eq:angbry}, there exists a positive constant $C$ independent of time, such that
    \begin{equation*}
        |u(x,t)-\lambda t| \leq C.
    \end{equation*}
\end{corollary}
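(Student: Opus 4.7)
The plan is to exploit the fact that both the governing equation and the boundary condition in \eqref{eq:angbry} are invariant under vertical translation $u \mapsto u + c$, so that $\tilde{w}(x,t) = w(x) + \lambda t$ from Theorem \ref{thm:3.3} gives an entire one-parameter family of translating solutions $\tilde{w}(x,t) + c$ for $c \in \mathbb{R}$. Once this is in hand, the corollary reduces to a comparison between $u$ and two such translating solutions bracketing the initial data.

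First I would observe that because both $u_0$ and $w$ lie in $C^3(\overline{\Omega})$ on the bounded domain $\Omega$, the quantity
\begin{equation*}
C_0 \;:=\; \sup_{x\in\overline{\Omega}} |u_0(x) - w(x)|
\end{equation*}
is finite. Then at $t=0$ we have the pointwise inequality
\begin{equation*}
\tilde{w}(x,0) - C_0 \;\le\; u(x,0) \;\le\; \tilde{w}(x,0) + C_0.
\end{equation*}
Both outer functions $\tilde{w} \pm C_0$ solve the same quasilinear parabolic equation with the same contact angle boundary condition as $u$.

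Next I would invoke the comparison principle for the quasilinear parabolic problem \eqref{eq:angbry} to conclude
\begin{equation*}
\tilde{w}(x,t) - C_0 \;\le\; u(x,t) \;\le\; \tilde{w}(x,t) + C_0 \qquad \text{for all } (x,t)\in\overline{Q_T}.
\end{equation*}
The crucial enabling fact here is the a priori gradient bound from Theorem \ref{thm:3.1}, which ensures that along any comparison argument the coefficients $a^{ij}$ remain uniformly bounded and strictly elliptic on the (compact) set where gradients are controlled, so the standard comparison principle for quasilinear parabolic equations with oblique/nonlinear Neumann data (as used in \cite{AW2}, \cite{GMWW}, \cite{CY1}) applies directly. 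Rearranging, $|u(x,t) - w(x) - \lambda t| \le C_0$, and since $\sup_{\overline{\Omega}}|w| < \infty$, the triangle inequality delivers
\begin{equation*}
|u(x,t) - \lambda t| \;\le\; C_0 + \sup_{\overline{\Omega}} |w| \;=:\; C,
\end{equation*}
with $C$ independent of $t$, as claimed.

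The main obstacle is strictly the verification of the comparison principle for this degenerate quasilinear operator paired with the nonlinear boundary condition $D_N u = -\cos\theta\sqrt{1+|Du|^2}$. The degeneracy of $a^{ij}$ for large $|Du|$ would normally be a problem, but the time-independent gradient estimate of Theorem \ref{thm:3.1} reduces everything to a uniformly parabolic, uniformly oblique setting where the standard argument (subtracting two ordered solutions, linearizing, and applying the maximum principle along with the Hopf lemma on $\Gamma_T$) goes through without modification. This is why the authors omit the details and defer to the earlier treatments.
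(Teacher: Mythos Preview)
Your proposal is correct and follows exactly the standard comparison argument from \cite{AW2}, \cite{GMWW}, \cite{CY1} that the paper defers to (the paper omits the proof entirely). The only minor quibble is your emphasis on the gradient bound as ``crucial'' for the comparison principle itself: at a contact point of two smooth solutions the gradients agree, so the linearized operator is fixed regardless of degeneracy; the role of Theorem~\ref{thm:3.1} is rather to guarantee long-time existence and uniform regularity so that the comparison can be run on all of $\overline{Q_T}$ for arbitrary $T$.
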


\begin{theorem} 
    Let $u$ be any solution of \eqref{eq:angbry}. Then
    $u(x,t)-(w(x)+\lambda t)$ converges to a constant as $t\longrightarrow+\infty$, where $w$ is a solution of \eqref{eq:ang_el_pro}.
\end{theorem}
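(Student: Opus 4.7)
The plan is to study the difference
$\psi(x,t) := u(x,t) - w(x) - \lambda t$
and show that it converges uniformly on $\overline{\Omega}$ to a constant as $t\to\infty$. By Corollary \ref{cor:3.4} applied to $u$ and the boundedness of $w$, $\psi$ is uniformly bounded on $\overline{\Omega}\times[0,\infty)$. Theorem \ref{thm:3.1} together with the analogous gradient bound for $w$ from Theorem \ref{thm:3.3} gives a uniform bound on $D\psi$, which standard parabolic Schauder estimates upgrade to uniform $C^{2+\alpha,1+\alpha/2}$ bounds.

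First I would derive a linear parabolic equation for $\psi$. Writing $\widetilde{w}(x,t):=w(x)+\lambda t$, both $u$ and $\widetilde{w}$ satisfy \eqref{eq:angbry}. Subtracting the two governing equations and applying the mean value theorem to the quasilinear operator $p\mapsto \sqrt{1+|p|^2}\,D^2_{p_ip_j}F(p,-1)$ along the segment from $D\widetilde{w}$ to $Du$ produces
\begin{equation*}
\psi_t = \bar{a}^{ij}(x,t)\psi_{x_ix_j} + \bar{b}^i(x,t)\psi_{x_i}\quad\text{in }Q_T,
\end{equation*}
with bounded coefficients and $\bar{a}^{ij}$ uniformly elliptic thanks to the gradient bound. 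An analogous linearization of the contact angle condition yields
\begin{equation*}
D_N\psi = \bar{c}^i(x,t)\psi_{x_i}\quad\text{on }\Gamma_T,
\end{equation*}
where $\bar{c}^i=-\cos\theta\,(u+w)_{x_i}/\bigl(\sqrt{1+|Du|^2}+\sqrt{1+|Dw|^2}\bigr)$. A direct computation using the boundary condition itself for $u$ and $w$ gives the key identity $\bar{c}^iN_i=\cos^2\theta$. Under assumption \textbf{A2}, $1-\bar{c}^iN_i=\sin^2\theta\geq 1-\epsilon_1^2>0$, so the boundary condition is a uniformly oblique, homogeneous Robin-type condition with no zeroth-order term.

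Next I would apply the weak maximum principle to conclude that $M(t):=\max_{\overline{\Omega}}\psi(\cdot,t)$ is non-increasing and $m(t):=\min_{\overline{\Omega}}\psi(\cdot,t)$ is non-decreasing, so $\mathrm{osc}(t):=M(t)-m(t)\searrow\ell\geq 0$. An $\omega$-limit argument then forces $\ell=0$: for any sequence $t_k\to\infty$, the shifts $\psi(\cdot,\cdot+t_k)$ are precompact in $C^{2,1}_{\mathrm{loc}}$ by the uniform regularity, and any limit $\psi_\infty$ solves the same linear problem (with limiting coefficients) on $\overline{\Omega}\times\mathbb{R}$ and satisfies $\max\psi_\infty(\cdot,t)\equiv M_\infty$ and $\min\psi_\infty(\cdot,t)\equiv m_\infty$ for all $t$. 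The strong maximum principle excludes interior extrema unless $\psi_\infty$ is spatially constant at each time; the Hopf boundary lemma, valid precisely because $1-\bar{c}^iN_i\geq \sin^2\theta>0$, excludes boundary extrema. Hence $\psi_\infty$ is a constant, and so $\ell=\mathrm{osc}(\psi_\infty)=0$. Combined with the monotone convergence of $M$ and $m$, this forces $\psi(\cdot,t)$ to converge uniformly to the common limit $M_\infty=m_\infty$.

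The main obstacle is the careful linearization of the nonlinear contact angle condition and the verification that the resulting boundary operator is compatible with both the weak maximum principle and the Hopf lemma. The explicit identity $\bar{c}^iN_i=\cos^2\theta$, together with the hypothesis $|\cos\theta|\leq\epsilon_1<1$ from \textbf{A2}, is what makes the otherwise standard maximum-principle machinery apply to the full nonlinear flow; without uniform transversality, the Hopf step would fail and one could not rule out a limit $\psi_\infty$ whose extremum lies on $\partial\Omega$.
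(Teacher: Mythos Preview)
Your proposal is correct and is precisely the standard argument the paper is deferring to: the paper omits the proof entirely, stating that it is ``basically the same as those in \cite{AW2}, \cite{GMWW}, and our parallel work \cite{CY1}.'' Your linearization of the difference $\psi=u-\widetilde w$, the verification that the resulting oblique boundary operator has normal component $1-\bar c^iN_i=\sin^2\theta>0$, and the subsequent monotonicity of $M(t),m(t)$ combined with the strong maximum principle and Hopf lemma on the $\omega$-limit is exactly the route taken in those references.
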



\section{Neumann problem} \label{sec:4}
In this section, we study the Neumann boundary problem \eqref{eq:neupro} as introduced in Section \ref{sec:1}:
\begin{equation} \label{eq:Neupro}
    \left\{ \begin{array}{cllcl}
            u_t & = & \sqrt{1+|Du|^2} \  D^2_{p_i p_j} F (Du,-1) u_{x_i x_j}   \quad &  \text{in} & \ {Q}_T , \\
            D_N u & = & \varphi  \quad &  \text{on} & \ \Gamma_T ,  \\
         u(\cdot,0) & = & u_0(\cdot)  \quad \ & \text{in} & \  \Omega_0 .
    \end{array}  \right.
\end{equation}
 Results in this section are organized in a similar way to the previous contact angle problem. The main result is a gradient estimate independent of time. Then we apply the same technique to the elliptic Neumann problem and derive the long time convergence result.

 Similar to the contact angle boundary condition, we make the following assumptions throughout this section:

\begin{description}
\item[B1.] 
Let $\Omega\in \mathbb{R}^n$ be a smooth strictly convex bounded domain. There exists a positive constant $k_0$ such that the curvature matrix $\{ k_{ij} \}_{i,j=1}^{n-1}$ at any point of $\partial\Omega$ satisfies the following condition,
\begin{equation*}
    \{ k_{ij} \} \geq k_0 \{ \delta_{ij} \}.
\end{equation*}
\item[B2.]
The Neumann boundary condition $\varphi$ in \eqref{eq:Neupro} can be extended to $\overline{\Omega}$ with $\varphi \in C^3 (\overline{\Omega})$. Furthermore, there exist a positive constant $\epsilon_1$ such that 
\begin{equation} \label{eq:neu_condA2}
    \|\varphi\|_{C^0(\overline{\Omega})},\,\,\,
    \|D\varphi\|_{C^0(\overline{\Omega})},\,\,\,
    \|D^2\varphi\|_{C^0(\overline{\Omega})} <\epsilon_1.
\end{equation} 

\item[B3.] The initial condition $u_0$ is assumed to be $C^3(\overline{\Omega})$ and satisfies $D_N u_0 = \varphi $ on $\partial \Omega$.

\item[B4.] Recall $c_1$ and $c_2$ in Section \ref{sec:2.1}. There exists a positive constant $\epsilon_2$ depending only on $n$ such that
\begin{equation} \label{eq:neu_condA4}
    \frac{c_2}{c_1} < \epsilon_2.
\end{equation}
\end{description}

\begin{theorem} \label{thm:4.1}
Assume {\bf B1} -- {\bf B4} and 
 $\epsilon_2$ is sufficiently small, then
there exists constant $C$ such that any solution $u(x,t) \in C^{3,2} (\overline{Q_T})$ of \eqref{eq:Neupro}
satisfies
\begin{equation}\label{eq:neugradest}
    \sup_{\overline{Q_T}} |Du| \leq C,
\end{equation}
where $C$ is independent of time.
\end{theorem}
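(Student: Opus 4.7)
The plan is to follow the architecture of the proof of Theorem \ref{thm:3.1}, replacing the role of $-\cos\theta\sqrt{1+|Du|^2}$ by $\varphi$ throughout. Define the auxiliary function
\[
\Psi(x,t) := \log W + a_0 h,
\]
where $h$ is as in \eqref{eq:h_cond}, $a_0>0$ is to be chosen, and $W$ is the natural Neumann analog of \eqref{W.CAC}, namely
\[
W := v - \langle Du,Dh\rangle\,\frac{\varphi}{v}.
\]
For $|Du|$ large, $W\sim v$, and a direct computation on $\partial\Omega$ using $D_Nu=\varphi$ and $D_Nh=-1$ gives $vW=1+|D_Tu|^2$, mirroring the key identity used in the contact angle case. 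Because $\Psi$ grows like $\log|Du|$, it suffices to bound $\Psi$ at its maximum point $(x_0,t_0)\in\overline{Q_T}$.

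\textbf{Case 1: $(x_0,t_0)\in\Gamma_T$.} Introduce geodesic coordinates on $\partial\Omega$ with $x_n$ pointing inward. From $\Psi_{x_i}=0$ for $i<n$, one solves for $v_{x_i}$ in terms of $u_{x_nx_i}$. Differentiating $u_{x_n}=\varphi$ tangentially (with parallel-transport curvature corrections involving $\{k_{ij}\}$) expresses $u_{x_nx_i}$ in terms of $\varphi_{x_i}$ and $\{k_{ij}u_{x_j}\}$. Substituting these into the Hopf-type inequality $\Psi_{x_n}\leq0$ and using the strict convexity $K\geq k_0 I$ yields
\[
\frac{k_0|D_Tu|^2}{1+|D_Tu|^2}\leq C_1\epsilon_1 + a_0,
\]
with $C_1$ depending only on $\partial\Omega$ and the $C^2$-bounds of $h$. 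Taking $a_0$ and $\epsilon_1$ small enough relative to $k_0$ bounds $|D_Tu|$, and $|D_Nu|\leq\epsilon_1$ then bounds $|Du|$.

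\textbf{Case 2: $(x_0,t_0)$ is interior.} Assume $|Du|>1$, so $a^{ij}$ is bounded. The parabolic maximum principle gives $0\geq a^{ij}\Psi_{x_ix_j}-\Psi_t$, and writing this out leads, as in the contact angle case, to $0\geq I_1+I_2+I_3$ where
\[
I_1=\frac{a^{ij}W_{x_ix_j}-W_t}{W},\quad I_2=-a_0^2a^{ij}h_{x_i}h_{x_j},\quad I_3=a_0a^{ij}h_{x_ix_j}.
\]
The bounds $I_2\geq -a_0^2(\max_\alpha\tau^{\alpha\alpha}+\tau^{nn})$ and $I_3\geq a_0k_1(\sum_\alpha\tau^{\alpha\alpha}+\tau^{nn})$ are identical to \eqref{angI2est}--\eqref{angI3est}. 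For $I_1$, substituting the differentiated governing equation $u_{x_k,t}=(a^{ij})_{x_k}u_{x_ix_j}+a^{ij}u_{x_ix_jx_k}$ produces the four-term decomposition $I_1=\tfrac{1}{W}(J_1+J_2+J_3+J_4)$ with $J_1$ the symmetric $\mathcal{A}$-form in $D^2u$, $J_2=T^3(D^2u,D^2u\,S)$ for the analogously defined vector $S=Du/v-(\varphi/v)Dh$, and $J_3,J_4$ consisting of $\varphi$-derivative remainders multiplied by $|Du|$ or by $a^{ij}D^2u$. Reducing to the claim $J_1+J_2+J_3+J_4\geq -Cv$ is equivalent to showing the bilinear form $\mathcal{B}(\gamma,\gamma)$ (the Neumann analog of \eqref{eq:B_matrep}) is positive semi-definite, after absorbing $J_3+J_4$ into $\tfrac{1}{10v}\sum_\alpha(\gamma^{\alpha\alpha})^2\tau^{\alpha\alpha}$.

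\textbf{Main obstacle.} The hard step is exactly the positive semi-definiteness of $\mathcal{B}$, and this is where assumption \textbf{B4} is decisive. Using \eqref{eq:F_cond1}--\eqref{eq:F_cond3}, Lemma \ref{lem:2.4}, and the bound $|\rho^\alpha|\lesssim |\varphi|/v$, $\tfrac14\leq \rho^n\leq 2$ from \eqref{eq:ang_rho_def} adapted to the Neumann $S$, the diagonal entries of $\mathcal{B}$ satisfy $B^{\alpha\alpha,\alpha\alpha},B^{\alpha n,\alpha n}\geq c_1'/v$ and $B^{nn,nn}\geq c_1'/v^3$, while every off-diagonal entry is controlled by $c_2'/v$ or $c_2'/v^3$ with matching scale. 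When $\epsilon_2=c_2/c_1$ is small, diagonal dominance yields $\mathcal{B}\geq 0$ and hence $I_1\geq -C$. Combining this with the $I_2,I_3$ estimates and choosing $a_0=\min\{k_1/2,k_0/3,1\}$ produces $v\leq C$, proving \eqref{eq:neugradest}. Compared with Theorem \ref{thm:3.1}, the simplification is that $\varphi$ does not multiply $v$, so the terms in $J_3,J_4$ coming from derivatives of the boundary data contribute only bounded multiples of $v$, and only $\epsilon_2$ (not $\epsilon_1$) needs to be restricted in the interior estimate; $\epsilon_1$-smallness is required only to close the boundary case.
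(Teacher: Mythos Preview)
Your approach is genuinely different from the paper's, and it has a gap in Case~1.  The paper does \emph{not} transplant the contact-angle auxiliary function to the Neumann problem.  Instead, following \cite{MWW}, it sets
\[
\Psi=\log|DW|^2+a_0h,\qquad W:=u+\varphi h,
\]
so that $D_NW=u_{x_n}-\varphi\equiv 0$ on $\partial\Omega$.  This makes the boundary case a one-line contradiction: $(|DW|^2)_{x_n}=2\sum_{i,j<n}k_{ij}W_{x_i}W_{x_j}\ge 2k_0|DW|^2$, hence $\Psi_{x_n}\ge 2k_0-a_0>0$ for $a_0<2k_0$, with no second derivatives of $u$ entering and \emph{no} smallness of $\epsilon_1$ required (consistent with the theorem, which only restricts $\epsilon_2$).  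The interior case then produces a different bilinear form $\mathcal{B}'$ built from $\mathcal A'(\cdot,\cdot)=\mathrm{tr}(\cdot\,A\,\cdot)$ (no $G$ factor) with $\rho^n\sim|Du|$ rather than $\rho^n\sim1$; the diagonal dominance argument under {\bf B4} is parallel to yours but with different scales.

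The gap in your Case~1 is that with $W=v-\langle Du,Dh\rangle\,\varphi/v$ the normal second derivative $u_{x_nx_n}$ does \emph{not} drop out of $\Psi_{x_n}$.  In the contact-angle case the cancellation works precisely because the boundary condition says $u_{x_n}/v=-\cos\theta$, so the coefficient of $u_{x_nx_n}$ in $v_{x_n}$ (namely $u_{x_n}/v=-\cos\theta$) is exactly offset by the $\cos\theta$ in $-(u_{x_k}h_{x_k}\cos\theta)_{x_n}$.  With Neumann data $u_{x_n}=\varphi$ and your choice of $\varphi/v$ in place of $\cos\theta$, a direct computation gives the coefficient of $u_{x_nx_n}$ in $W_{x_n}$ on $\partial\Omega$ equal to $2\varphi/v-\varphi^3/v^3\neq0$.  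Since you have no a~priori control on $u_{x_nx_n}$, the Hopf inequality $\Psi_{x_n}\le0$ does not close, and your displayed bound $\frac{k_0|D_Tu|^2}{1+|D_Tu|^2}\le C_1\epsilon_1+a_0$ is missing this term.  A secondary issue is that $\varphi/v$ depends on $Du$, so differentiating $W$ twice in Case~2 produces extra quadratic-in-$D^2u$ terms (of relative size $O(\varphi/v)$) beyond your $J_1,\ldots,J_4$; these could likely be absorbed into $\mathcal B$ for large $|Du|$, but you have not accounted for them.  The paper's choice of $W=u+\varphi h$ avoids both difficulties.
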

Similar to Lemma \ref{lem:3.2}, we state the following Lemma:
\begin{lemma}\label{lem:4.2}
Let $u(x,t)$ be a smooth solution to \eqref{eq:Neupro}. There exists a constant 
$C=C(u_0)$ such that 
\begin{equation*}
    \sup_{Q_T} |u_t|^2=\sup_{\Omega_0} |u_t|^2 \leq C.
\end{equation*}

\end{lemma}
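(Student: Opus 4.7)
The plan is to apply the parabolic maximum principle to $v := u_t$, which I will show satisfies a linear parabolic equation with homogeneous Neumann boundary data. Differentiating the governing PDE in \eqref{eq:Neupro} with respect to $t$ and writing $a^{ij} = a^{ij}(Du)$, I would obtain
\[
v_t = a^{ij}\,v_{x_i x_j} + b^k\,v_{x_k}, \qquad b^k := (a^{ij})_{p_k}(Du)\,u_{x_i x_j}.
\]
For a smooth $u \in C^{3,2}(\overline{Q_T})$, the coefficients $a^{ij}$ and $b^k$ are continuous and bounded on $\overline{Q_T}$, and by convexity of $F$ the matrix $(a^{ij})$ is positive semi-definite. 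Since the Neumann datum $\varphi$ is time-independent by assumption \textbf{B2}, differentiating $D_N u = \varphi$ in $t$ gives the homogeneous boundary condition $D_N v = 0$ on $\Gamma_T$.

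Next I would apply the weak parabolic maximum principle to $v$: its extrema over $\overline{Q_T}$ must be attained on the parabolic boundary $(\Omega \times \{0\}) \cup (\partial \Omega \times [0,T])$. If the maximum were attained at some $(x_0, t_0) \in \partial \Omega \times (0, T]$, Hopf's lemma would yield $D_N v(x_0, t_0) < 0$, contradicting $D_N v = 0$. Applying the same reasoning to $-v$ handles the minimum, so
\[
\sup_{\overline{Q_T}} |u_t|^2 = \sup_{\Omega_0} |u_t|^2.
\]
The right-hand side is then controlled using the PDE at $t = 0$: $u_t(\cdot,0) = a^{ij}(Du_0)(u_0)_{x_i x_j}$, bounded by $C\|u_0\|_{C^2(\overline{\Omega})}$ since $|Du_0|$ is finite and $|a^{ij}|$ is bounded on any fixed compact set of $p$-values.

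The main delicate point in this plan is the invocation of Hopf's lemma, since $(a^{ij})$ is only weakly elliptic: by \eqref{eq:F_cond1}, its smallest eigenvalue, in the direction of $Du$, is of order $1/v^2$ and can be very small when $|Du|$ is large. However, for a given smooth solution on the compact set $\overline{Q_T}$, $|Du|$ is a priori bounded, so $(a^{ij})$ is uniformly elliptic along the solution (with a constant that may depend qualitatively on $T$). This suffices for the classical Hopf lemma to apply. Importantly, this $T$-dependence enters only in the qualitative step of ruling out boundary maxima; the quantitative bound $C = C(u_0)$ in the final estimate remains independent of $T$ because the extremum is shown to be attained at $t = 0$, where everything is controlled by the initial data alone.
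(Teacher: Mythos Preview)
Your proof is correct and is precisely the standard maximum-principle/Hopf-lemma argument that the paper has in mind: the paper omits the proof of Lemma~\ref{lem:4.2} entirely, stating only that it is analogous to Lemma~\ref{lem:3.2}, whose proof is in turn deferred to \cite{AW2} and \cite{CY1}. Your treatment of the degeneracy issue (uniform ellipticity on $\overline{Q_T}$ for the qualitative Hopf step, while the final constant depends only on $u_0$) is the right way to handle it.
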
 

With that, we continue to the\\
\textbf{Proof of Theorem 4.1:}
The idea of the proof is from \cite{MWW}. Firstly we define the following auxiliary function, 
\begin{equation} \label{eq:neuaux}
    \Psi(x) := \log{|DW|^2} + a_0 h, 
\end{equation}
where $a_0$ is a positive constant to be determined, $h$ is the same function defined on $\overline{\Omega}$ given by \eqref{eq:h_cond}, and 
\begin{equation*}
    W := u -Q,\quad\text{where $Q=- \varphi h$}
\end{equation*}
with $\varphi$ being the boundary function from \eqref{eq:Neupro}.
(Note that the $W$ here is not to be confused with the $W$ in \eqref{W.CAC}).
We remark that $\Psi$ is again a function of $Du$.  

Assume the maximum of $\Psi$ occurs at $(x_0,t_0)$. Similar to the proof of Theorem \ref{thm:3.1}, the proof consists of two cases: \\ \\
\textbf{Case 1}: $(x_0,t_0) \in \Gamma_T$. Again, the result in this case is independent of the governing equation of \eqref{eq:Neupro}. The proof in this case is essentially the same as in \cite{MWW} but again, for reader's convenience, we outline it here.

Many of the notations below have their analogues from the proof of Theorem \ref{thm:3.1}. Notice that from the boundary condition in \eqref{eq:Neupro}, we have $W_{x_n} \equiv 0$ on $\partial\Omega$ and hence $|DW|^2 = \sum_{i=1}^{n-1} W_{x_i}^2 $. Maximum principle gives  at $(x_0,t_0)$ that,
\begin{eqnarray*}
    0 \geq \,\Psi_{x_n} &=&
    \frac{ \sum_{i=1}^{n-1} 2W_{x_i} W_{x_i x_n} }{|DW|^2} - a_0 \\
   &= & \frac{ \sum_{i=1}^{n-1} 2 \left(  W_{x_i} W_{x_n x_i} + \sum_{j=1}^{n-1} k_{ij} W_{x_i} W_{x_j}  \right) }{|DW|^2} - a_0 \\
   &= & \frac{ \sum_{i,j=1}^{n-1} 2  k_{ij} W_{x_i} W_{x_j}   }{|DW|^2} - a_0 \\
   &\geq & 2 k_0 - a_0.
\end{eqnarray*}
Hence by taking $a_0 \in (0,2k_0)$, we have a contradiction, which means the maximum of $\Psi$ does not occur on $\Gamma_T$. \\ \\
\textbf{Case 2}: $(x_0,t_0)$ is a interior point of $\Omega_T$. We perform the following calculation at this point. Without loss of generality, we can assume $|Du|>1$. The maximum principle implies
\begin{equation} \label{eq:neumax1}
    \Psi_{x_i} = \frac{\left(|DW|^2\right)_{x_i}}{|DW|^2} + a_0 h_{x_i} = 0,
\end{equation}
\begin{equation}  \label{eq:neumax2}
    \Psi_{x_i x_j} = \frac{\left(|DW|^2 \right)_{x_i x_j}}{|DW|^2} - \frac{\left(|DW|^2 \right)_{x_i} \left(|DW|^2 \right)_{x_j}}{|DW|^4} + a_0 h_{x_i x_j}, \quad \{ \Psi_{x_i x_j} \} \leq 0,
\end{equation}
and
\begin{equation} \label{eq:neumax3}
    \Psi_t = \frac{\left(|DW|^2 \right)_t}{|DW|^2} \geq 0.
\end{equation}
Apply \eqref{eq:neumax1} to \eqref{eq:neumax2}, we have
\begin{equation}  \label{eq:neuPsiij}
    \Psi_{x_i x_j} = \frac{\left(|DW|^2 \right)_{x_i x_j}}{|DW|^2} - a_0^2 h_{x_i} h_{x_j} + a_0 h_{x_i x_j}.
\end{equation}
Since $\{a^{ij}\}$ is semi-positive, from \eqref{eq:neumax2} and \eqref{eq:neumax3} we have 
\begin{equation} \label{eq:neuineq1}
     0 \geq a^{ij} \Psi_{x_i x_j} - \Psi_t.
\end{equation}
With the result in \eqref{eq:neuPsiij}, \eqref{eq:neuineq1} becomes
\begin{equation}  \label{eq:neuineq2}
    0 \geq a^{ij} \frac{\left(|DW|^2 \right)_{x_i x_j}}{|DW|^2} - \frac{\left(|DW|^2 \right)_t}{|DW|^2} - a_0^2 a^{ij} h_{x_i} h_{x_j} + a_0 a^{ij} h_{x_i x_j}.
\end{equation}
Similar to the proof of Theorem \ref{thm:3.1}, we define
\begin{eqnarray}
   I_1 &:= & a^{ij} \frac{\left(|DW|^2 \right)_{x_i x_j}}{|DW|^2} - \frac{\left(|DW|^2 \right)_t}{|DW|^2}, \\
    I_2 &:= & -a_0^2 a^{ij} h_{x_i} h_{x_j}, \\
    I_3 &:= & a_0 a^{ij} h_{x_i x_j}     
\end{eqnarray}
Notice that $I_2$ and $I_3$ are estimated from below by \eqref{angI2est} and \eqref{angI3est}:
\begin{equation} \label{eq:neuIest}
    I_2 \geq - a_0^2 \left(\max_{\alpha} \tau^{\alpha \alpha} + \tau^{n n} \right),
\quad
    I_3 \geq a_0 k_1 \left(\sum_{\alpha} \tau^{\alpha \alpha} + \tau^{n n} \right).    
\end{equation}

Next we compute the expression $a^{ij} \left(|DW|^2 \right)_{x_i x_j} - \left(|DW|^2 \right)_t$ in $I_1$. 
Firstly, we differentiate the governing equation in \eqref{eq:Neupro}.
\begin{equation*}
    u_{x_k,t} = (a^{ij})_{x_k} u_{x_i x_j} + a^{ij} u_{x_i x_j x_k}.
\end{equation*}
Then 
\begin{eqnarray}
    a^{ij} \left(|DW|^2 \right)_{x_i x_j} - \left(|DW|^2 \right)_t 
    & =& 2 a^{ij} W_{x_k x_i} W_{x_k x_j} - 2 (a^{ij})_{x_k} W_{x_k} u_{x_i x_j} - 2 a^{ij} W_{x_k} Q_{x_i x_j x_k}
    \nonumber\\
    & =: & 2 J_1 + 2 J_2 + 2 J_3. \label{eq:neu_Ji}
\end{eqnarray}

For $J_3$, since $a^{ij}$ is bounded when $|Du| >1$, we simply estimate it by
\begin{equation}  \label{eq:neuJ3}
    J_3 \geq -C v.
\end{equation}

For $J_1$, notice that it is semi-positive definite.
Similar to the contact angle case, we will also make use of the orthonormal basis we have chosen for $\mathbb{R}^n$ and $n \times n$ symmetric matrices as in Section \ref{Sec:2.2}.  Firstly,
\begin{equation} \label{eq:neuJ1}
\begin{split}
   J_1 &\ = \ a^{ij} W_{x_k x_i} W_{x_k x_j} \\ 
   &\ =\ \text{tr}(D^2 W \  A \  D^2 W) \\
   &\ =\ \text{tr}(D^2 u \  A \  D^2 u) - 2 \text{tr}(D^2 u \  A \  D^2 Q)  + \text{tr}(D^2 Q \  A \  D^2 Q) \\
   &\ =\ J_{11} + J_{12} + J_{13},
\end{split}
\end{equation}
where 
$J_{11} :=\text{tr}(D^2 u \  A \  D^2 u)$,
$J_{12} :=  - 2 \text{tr}(D^2 u \  A \  D^2 Q)$,
$J_{13} :=  \text{tr}(D^2 Q \  A \  D^2 Q).$
Now we define the following semi-positive definite bilinear form on $n\times n$ symmetric matrices,
\begin{equation*}
    \mathcal{A}' (\cdot,\cdot) = \text{tr}( \cdot \  A \  \cdot).
\end{equation*}
Results for $\mathcal{A}'$ evaluated on the basis elements are
\begin{eqnarray*}
    \mathcal{A}' (\Phi^{\alpha \alpha},\Phi^{\alpha \alpha}) & = & \tau^{\alpha \alpha},\\
    \mathcal{A}' (\Phi^{\alpha n},\Phi^{\alpha n}) &=& \frac{1}{2} (\tau^{n n} + \tau^{\alpha \alpha}),\\
    \mathcal{A}' (\Phi^{n n},\Phi^{n n}) & = &\tau^{n n},\\
    \mathcal{A}' (\Phi^{\alpha \alpha},\Phi^{\alpha n}) &=& \frac{1}{\sqrt{2}} \tau^{\alpha n},\\
    \mathcal{A}' (\Phi^{\alpha \alpha},\Phi^{\beta \beta}) &=& 0, \quad (\alpha \neq \beta),\\
    \mathcal{A}' (\Phi^{\alpha \alpha},\Phi^{n n}) &=&0,\\
    \mathcal{A}' (\Phi^{\alpha \alpha},\Phi^{\beta n}) &=& 0 \quad (\alpha \neq \beta),\\
    \mathcal{A}' (\Phi^{\alpha n},\Phi^{\beta n}) &=& \frac{1}{2} \tau^{\alpha \beta} \quad (\alpha \neq \beta),\\
    \mathcal{A}' (\Phi^{\alpha n},\Phi^{n n}) &=& \frac{1}{\sqrt{2}} \tau^{\alpha n}.
\end{eqnarray*}
Notice that $J_{13}\geq 0$ and hence can be discarded.

For $J_{11}$, we decompose it as
\begin{equation} \label{eq:neuJ1_1}
\begin{split}
    J_{11} & = \mathcal{A}' (D^2 u \  A \  D^2 u) \\ & = \sum_{\alpha} (\gamma^{\alpha \alpha})^2 \tau^{\alpha \alpha} + \sum_{\alpha} \frac{(\gamma^{\alpha n})^2}{2} (\tau^{\alpha \alpha} + \tau^{n n}) + (\gamma^{n n})^2 \tau^{n n} \\
    & \ \ \ + \sum_{\alpha} \sqrt{2} \gamma^{\alpha \alpha} \gamma^{\alpha n} \tau^{\alpha n} + \sum_{\alpha \neq \beta} \frac{\gamma^{\alpha n} \gamma^{\beta n}}{2} \tau^{\alpha \beta} + \sum_{\alpha} \sqrt{2} \gamma^{\alpha n} \gamma^{n n} \tau^{\alpha n}.
\end{split}
\end{equation}

For $J_{12}$, we use $\gamma^{\alpha \alpha}$ to express $\gamma^{\alpha n}$ and $\gamma^{n n}$. Notice that \eqref{eq:neumax1} suggests
\begin{equation*}
    D^2 u \ DW = - \frac{a_0}{2} |DW|^2 Dh + D^2 Q \ DW,
\end{equation*}
or with respect to the $\phi^{\alpha}$ and $\phi^n$ directions, we have
\begin{equation} \label{eq:neu_phia}
    (\phi^{\alpha})^T D^2 u \ DW = - \frac{a_0}{2} |DW|^2 (\phi^{\alpha})^T Dh + (\phi^{\alpha})^T D^2 Q \ DW,
\end{equation}
\begin{equation} \label{eq:neu_phin}
    (\phi^n)^T D^2 u \ DW = - \frac{a_0}{2} |DW|^2 (\phi^n)^T Dh + (\phi^n)^T D^2 Q \ DW.
\end{equation}
Using $DW = Du - DQ$, where $Q =- \varphi h $, we define
\begin{equation} \label{eq:neu_rho_def}
\rho^{\alpha} := (\phi^{\alpha})^T DW = - (\phi^{\alpha})^T DQ,
\quad\text{and}\quad
\rho^n := (\phi^n)^T DW = |Du| - (\phi^n)^T DQ ,
\end{equation}
which lead to
 \begin{equation} \label{eq:neu_phi_rhoa}
    (\phi^{\alpha})^T D^2 u \ DW = \gamma^{\alpha \alpha} \rho^{\alpha} + \frac{ \gamma^{\alpha n} }{\sqrt{2}} \rho^n ,
\end{equation}
\begin{equation} \label{eq:neu_phi_rhon}
    (\phi^n)^T D^2 u \ DW = \sum_{\alpha} \frac{ \gamma^{\alpha n} }{\sqrt{2}} \rho^{\alpha}  + \gamma^{n n} \rho^n .
\end{equation}
Notice that:
\begin{equation} \label{eq:neu_rhoa}
    |\rho^{\alpha}| \leq \| Q  \|_{C^1(\overline{\Omega})},
\end{equation}
and for $|Du|$ sufficiently large, we have
\begin{equation} \label{eq:neu_rhon}
    \frac{3}{4} |Du| \leq \rho^n = (\phi^n)^T DW \leq \frac{5}{4} |Du|.
\end{equation}
Combining with \eqref{eq:neu_phia} and \eqref{eq:neu_phin}, we derive
\begin{eqnarray*}
    \gamma^{\alpha n} & = & \frac{\sqrt{2}}{\rho^n} \left( -\gamma^{\alpha \alpha} \rho^{\alpha}  - \frac{a_0}{2} |DW|^2 (\phi^{\alpha})^T Dh + (\phi^{\alpha})^T D^2 Q \ DW  \right) \\
    & = & - \frac{\sqrt{2} \rho^{\alpha}}{\rho^n} \gamma^{\alpha \alpha} + O(1) a_0 |Du| + O(1) \\
    \gamma^{n n} & = & \frac{1}{\rho^n} \left( -\sum_{\alpha} \frac{ \gamma^{\alpha n} }{\sqrt{2}} \rho^{\alpha}  - \frac{a_0}{2} |DW|^2 (\phi^n)^T Dh + (\phi^n)^T D^2 Q \ DW  \right) \\
    & = & -\sum_{\alpha} \frac{ \gamma^{\alpha n} \  \rho^{\alpha} }{\sqrt{2} \ \rho^n } + O(1) a_0 |Du| + O(1) \\
    & = & \sum_{\alpha} \frac{(\rho^{\alpha})^2}{ (\rho^n)^2 } \gamma^{\alpha \alpha} + \big( O(|Du| + O(1) \big) a_0 + O(1) + O\left( \frac{1}{|Du|} \right)
\end{eqnarray*}
Then $J_{12}$ can be bounded from below as
\begin{equation} \label{eq:neuJ1_2}
\begin{split}
    J_{12}   \geq &-2 \left\| D^2 Q \right\|  \left|\text{tr}(A \  D^2 Q)  \right| \\
     = & -2 \left\| D^2 Q \right\| \left|  \sum_{\alpha} \gamma^{\alpha \alpha} \text{tr}(A\Phi^{\alpha \alpha}) +  \sum_{\alpha} \gamma^{\alpha n} \text{tr}(A\Phi^{\alpha n}) + \gamma^{n n} \text{tr}(A\Phi^{n n})  \right| \\
     = & -2 \left\| D^2 Q \right\| \left|  \sum_{\alpha} \gamma^{\alpha \alpha} \tau^{\alpha \alpha} +  \sum_{\alpha} \sqrt{2} \gamma^{\alpha n} \tau^{\alpha n} + \gamma^{n n} \tau^{n n}  \right| \\
     \geq & -C \left|\sum_{\alpha} \gamma^{\alpha \alpha} \left(   \tau^{\alpha \alpha}  -   \frac{2\rho^{\alpha}}{\rho^n}  \tau^{\alpha n} +  \frac{(\rho^{\alpha})^2}{(\rho^n)^2}  \tau^{n n} \right) \right| \\
    &   -C  a_0 O\left( |Du| \right) \left(\sum_{\alpha} \sqrt{2} \tau^{\alpha n} + \tau^{n n} \right).
\end{split}
\end{equation}

Next we consider $J_2$ following the same process as in the proof of Theorem \ref{thm:3.1},
\begin{equation*}
\begin{split}
    J_2 = & - (a^{ij})_{x_k} u_{x_i x_j} W_{x_k} \\
    = & -D_{p_l} \left(|p| D^2_{p_i p_j} F \right)\Big|_{(Du,-1)} u_{x_i x_j} u_{x_k x_l} W_{x_k}   \\
    = & \  T^3(D^2 u , D^2 u \  DW).
\end{split}
\end{equation*}
From \eqref{eq:neu_phi_rhoa} and \eqref{eq:neu_phi_rhon} we have 
\begin{equation*}
    D^2 u \  DW = \left( \gamma^{\alpha \alpha} \rho^{\alpha} + \frac{ \gamma^{\alpha n} }{\sqrt{2}} \rho^n \right) \phi^{\alpha} + \left( \sum_{\alpha} \frac{ \gamma^{\alpha n} }{\sqrt{2}} \rho^{\alpha}  + \gamma^{n n} \rho^n  \right) \phi^n. 
\end{equation*}
We define, for $1\leq \alpha \leq n-1$, 
\begin{equation*}
    \eta^{\alpha} := \gamma^{\alpha \alpha} \rho^{\alpha} + \frac{ \gamma^{\alpha n} }{\sqrt{2}} \rho^n,
\,\,\,\text{and}\,\,\, 
    \eta^n := \sum_{\alpha} \frac{ \gamma^{\alpha n} }{\sqrt{2}} \rho^{\alpha}  + \gamma^{n n} \rho^n .
\end{equation*}
Then we can apply the result \eqref{eq:T3_cal1} and \eqref{eq:T3_cal2} to $J_2$ with $V = DW$. Now we estimate $J_1 + J_2 +J_3$ by the following decomposition:
\begin{equation*}
\begin{split}
    J_1 + J_2 +J_3 \ = & \ \frac{1}{10}\sum_{\alpha} (\gamma^{\alpha \alpha})^2 \tau^{\alpha \alpha} +J_{12} + J_{13} + J_3\\
   & \ + J_{11} - \frac{1}{10}\sum_{\alpha} (\gamma^{\alpha \alpha})^2 \tau^{\alpha \alpha} +J_2.
\end{split}
\end{equation*}
We claim that
\begin{equation} \label{eq:neu_cl1}
    \frac{1}{10}\sum_{\alpha} (\gamma^{\alpha \alpha})^2 \tau^{\alpha \alpha} +J_{12} + J_{13} + J_3 \geq -Cv,
\end{equation}
and
\begin{equation} \label{eq:neu_cl2}
    J_{11} - \frac{1}{10}\sum_{\alpha} (\gamma^{\alpha \alpha})^2 \tau^{\alpha \alpha} +J_2 \geq 0.
\end{equation}

Recall the estimates \eqref{eq:neuJ1_2} and \eqref{eq:neuJ3} for $J_{12}$ and $J_3$. 
Note that $\frac{1}{10}\sum_{\alpha} (\gamma^{\alpha \alpha})^2 \tau^{\alpha \alpha}$ is a positive quadratic form on $\gamma^{\alpha \alpha}$, $1\leq \alpha \leq n-1$, while $J_{12}$ is bounded below by a linear form on $\gamma^{\alpha \alpha}$. Hence \eqref{eq:neu_cl1} follows directly.

Now consider claim \eqref{eq:neu_cl2}. With \eqref{eq:neuJ1_1} and results in Section \ref{Sec:2.2}, \eqref{eq:T3_cal1} and \eqref{eq:T3_cal2}, we analyze the left hand side of \eqref{eq:neu_cl2} by writing it as a bilinear form of 
\[\gamma = \left(\gamma^{11}, \cdots, \gamma^{n-1,n-1}, \gamma^{1n}, \cdots, \gamma^{n-1,n}, \gamma^{nn}  \right)^T.\] 
We have
\begin{equation}
\begin{split}
    \,\,\, & J_{11} - \frac{1}{10}\sum_{\alpha} (\gamma^{\alpha \alpha})^2 \tau^{\alpha \alpha} +J_2 \\
  = \,\,\,&  \sum_{\alpha} \left( \frac{9}{10}  \tau^{\alpha \alpha} + \rho^{\alpha} T^3 (\phi^{\alpha}, \phi^{\alpha}, \phi^{\alpha}) \right)   (\gamma^{\alpha \alpha})^2  \\
  & + \sum_{\alpha \neq \beta}  \rho^{\beta} T^3 (\phi^{\alpha}, \phi^{\alpha}, \phi^{\beta})    \gamma^{\alpha \alpha} \gamma^{\beta \beta} \\
  & + \sum_{\alpha}    \left( \rho^n T^3 (\phi^{\alpha}, \phi^{n}, \phi^{\alpha}) + \frac{1}{2} \tau^{\alpha \alpha} +  \frac{1}{2} \tau^{nn} + \rho^{\alpha} T^3 (\phi^{\alpha}, \phi^n, \phi^n)  \right) (\gamma^{\alpha n})^2 \\
  & + \sum_{\alpha \neq \beta}  \left( \rho^n  T^3 (\phi^{\alpha}, \phi^{n}, \phi^{\beta}) + \frac{1}{2}  \tau^{\alpha \beta} + \rho^{\beta} T^3 (\phi^{\alpha}, \phi^n, \phi^n)   \right)  \gamma^{\alpha n} \gamma^{\beta n} \\
  & + \sum_{\alpha}  \left( \frac{\rho^n}{\sqrt{2}}  T^3 (\phi^{\alpha}, \phi^{\alpha}, \phi^{\alpha}) + \sqrt{2}  \tau^{\alpha n}  + \frac{\rho^{\alpha}}{\sqrt{2}} T^3 (\phi^{\alpha}, \phi^{\alpha}, \phi^{n}) + \sqrt{2} \rho^{\alpha}  T^3 (\phi^{\alpha}, \phi^{n}, \phi^{\alpha}) \right) \gamma^{\alpha \alpha} \gamma^{\alpha n} \\
  & + \sum_{\alpha \neq \beta}  \left( \frac{\rho^n}{\sqrt{2}}  T^3 (\phi^{\alpha}, \phi^{\alpha}, \phi^{\beta}) + \frac{\rho^{\beta}}{\sqrt{2}} T^3 (\phi^{\alpha}, \phi^{\alpha}, \phi^{n}) +  \sqrt{2} \rho^{\alpha}  T^3 (\phi^{\beta}, \phi^{n}, \phi^{\alpha})  \right) \gamma^{\alpha \alpha} \gamma^{\beta n} \\
  & + \left( \rho^n  T^3 (\phi^{n}, \phi^{n}, \phi^{n})  +   \tau^{n n}  \right)  (\gamma^{n n})^2 \\
  & + \sum_{\alpha} \left( \sqrt{2}  \rho^n  T^3 (\phi^{\alpha}, \phi^n, \phi^n) + \frac{\rho^n}{\sqrt{2}} T^3 (\phi^{n}, \phi^{n}, \phi^{\alpha}) + \sqrt{2} \tau^{\alpha n}  +  \frac{\rho^{\alpha}}{\sqrt{2}} T^3 (\phi^{n}, \phi^{n}, \phi^{n})  \right) \gamma^{\alpha n} \gamma^{n n}  \\
  & + \sum_{\alpha} \left(  \rho^n  T^3 (\phi^{\alpha}, \phi^{\alpha}, \phi^{n}) + \rho^{\alpha} T^3 (\phi^{n}, \phi^{n}, \phi^{\alpha})   \right) \gamma^{\alpha \alpha} \gamma^{n n} \\
  :=\,\,\, & \mathcal{B}'(\gamma, \gamma).
\end{split}
\end{equation}
Here we define the same matrix representation for $\mathcal{B}'(\cdot,\cdot)$ as in \eqref{eq:B_matrep}. Recall Lemma \ref{lem:2.4} and estimates for $\rho^{\alpha}$ and $\rho^n$, \eqref{eq:neu_rhoa} and \eqref{eq:neu_rhon}. When $\epsilon_2$ is sufficiently small, all the diagonal entries of $\mathcal{B}'$ are bounded from below in the sense that there exists positive $c_1''(\varphi,c_1,c_2)$ such that
\begin{equation*}
    B'^{\alpha\alpha,\alpha\alpha}, \ B'^{\alpha n,\alpha n} \geq c_1'', \quad B'^{nn,nn} \geq \frac{c_1''}{v^2}.
\end{equation*}
Furthermore, non-diagonal entries of $\mathcal{B}'$ are bounded from above
in the sense that there exists positive $c_2''(\theta,c_1,c_2)$ such that
\begin{equation*}
    B'^{\alpha\alpha,\beta\beta}, \ B'^{\alpha \alpha, \alpha n} , \ B'^{\alpha \alpha,\beta n}, \  B'^{\alpha n,\beta n} \leq c_2'', \quad B'^{\alpha \alpha, n n} , \  B'^{\alpha n, n n} \leq \frac{c_2''}{v^2},\quad\text{for $\alpha\neq\beta$.}
\end{equation*}
Hence, there exists sufficiently small $\epsilon_2$ depending on $n$ such that $\mathcal{B}'(\cdot,\cdot)$ is semi-positive definite. This proves the
second claim \eqref{eq:neu_cl2}.

Finally, \eqref{eq:neu_cl1} and \eqref{eq:neu_cl2} suggest
\begin{equation*}
    a^{ij} \left(|DW|^2 \right)_{x_i x_j} - \left(|DW|^2 \right)_t \geq -C v,  
\quad
\text{or}
\quad
    I_1 \geq - \frac{C v }{|DW|^2} .
\end{equation*}
Combining this lower bound for $I_1$ together with \eqref{eq:neuIest} for $I_2$ and $I_3$, 
we obtain from \eqref{eq:neuineq2} that 
\begin{equation*}
\begin{split}
    0 & \geq a^{ij} \Psi_{x_i x_j} - \Psi_t \\
    & \geq - \frac{Cv }{|Dw|^2} - a_0^2 \left(\max_{\alpha} \tau^{\alpha \alpha} + \tau^{n n} \right) + a_0 k_1 \left(\sum_{\alpha} \tau^{\alpha \alpha} + \tau^{n n} \right) \\
    & \geq - \frac{C}{v} - a_0^2 \left(\max_{\alpha} \tau^{\alpha \alpha} + \tau^{n n} \right) + a_0 k_1 \left(\sum_{\alpha} \tau^{\alpha \alpha} + \tau^{n n} \right).
\end{split}
\end{equation*}
By choosing $a_0=\min \left\{ k_1 \frac{\sum_{\alpha} \tau^{\alpha \alpha} + \tau^{n n}}{\left(\max_{\alpha} \tau^{\alpha \alpha} + \tau^{n n} \right)},\frac{3}{2} k_0  \right\}$, this implies at $(x_0,t_0)$ that
\begin{equation*}
    v\leq C
\end{equation*}
completing the proof of the gradient estimate \eqref{eq:neugradest}.
\hfill$\square$\\

Similar to the contact angle boundary problem, the next step is to establish a solution to the elliptic problem, and finally to show that the solution to the parabolic problem converges to a translating solution. Consider \eqref{eq:elneu_pro}:
\begin{equation} \label{eq:neu_el_pro}
    \left\{ \begin{array}{cllcl}
           \lambda  & = & \sqrt{1+|Dw|^2} \  D^2_{p_i p_j} F (Dw,-1) w_{x_i x_j}    \quad &  \text{in} &  \ \Omega , \\
            D_N w & = & \varphi  \quad &  \text{on} & \ \partial \Omega .  
    \end{array}  \right.
\end{equation}
\begin{theorem}
    Assume {\bf B1} -- {\bf B4} and 
 $\epsilon_2$ is sufficiently small. Then there exists a unique $\lambda$ and a corresponding smooth solution $w$ for \eqref{eq:neu_el_pro}. In particular, the solution is unique up to a constant. 
\end{theorem}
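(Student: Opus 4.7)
The plan is to mirror the regularization argument used in Theorem \ref{thm:3.3} for the contact angle case. I would begin by introducing the approximating eigenvalue problem
\begin{equation*}
\left\{ \begin{array}{cllcl}
     \epsilon w^{\epsilon} & = & \sqrt{1+|Dw^{\epsilon}|^2} \ D^2_{p_i p_j} F (Dw^{\epsilon},-1) w^{\epsilon}_{x_i x_j}    \quad & \text{in} & \ \Omega,  \\
     D_N w^{\epsilon} & = & \varphi \quad & \text{on} & \ \partial \Omega,
\end{array} \right.
\end{equation*}
which, owing to the zeroth-order term $\epsilon w^\epsilon$, is solvable for each fixed $\epsilon>0$ by standard quasilinear elliptic theory once a priori estimates are in place.

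The next step is to derive two uniform-in-$\epsilon$ estimates. First, a barrier argument based on strict convexity of $\Omega$ and the smallness of $\varphi$ from \textbf{B2} (as in \cite{AW2, MWW, GMWW}) produces a uniform bound $\|\epsilon w^\epsilon\|_{L^\infty(\Omega)} \leq C$. Second, a uniform gradient bound $|Dw^\epsilon|\leq C$ is established by repeating essentially word-for-word the argument of Theorem \ref{thm:4.1}, applied to the auxiliary function $\Psi = \log|DW|^2 + a_0 h$ with $W = w^\epsilon - Q$ and $Q = -\varphi h$. The only structural change introduced by the $\epsilon w^\epsilon$ term appears after differentiating the equation in $x_k$, which yields $a^{ij} w^\epsilon_{x_i x_j x_k} = \epsilon w^\epsilon_{x_k} - (a^{ij})_{x_k} w^\epsilon_{x_i x_j}$; substituting this into the computation of $a^{ij}(|DW|^2)_{x_i x_j}$ generates the single extra term $2\epsilon W_{x_k} w^\epsilon_{x_k}$. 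Since $W_{x_k} w^\epsilon_{x_k} = |Dw^\epsilon|^2 - Q_{x_k} w^\epsilon_{x_k} \geq |Dw^\epsilon|^2 - \|DQ\|_{L^\infty}|Dw^\epsilon|$, this additional contribution is non-negative whenever $|Dw^\epsilon|$ is large, and therefore can be dropped. The decomposition into $J_1, J_2, J_3$, the bilinear-form analysis of $\mathcal{B}'$, and the choice of $a_0$ at the end of the proof of Theorem \ref{thm:4.1} then go through unchanged.

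With these uniform bounds in hand, the passage $\epsilon \to 0$ proceeds as follows. The gradient bound implies $|D(\epsilon w^\epsilon)| \leq C\epsilon \to 0$, so $\epsilon w^\epsilon$ is asymptotically spatially constant; combined with its $L^\infty$-bound, this yields $\epsilon w^\epsilon \to \lambda$ uniformly for some $\lambda \in \mathbb{R}$. Fixing $x_0 \in \Omega$ and considering $\tilde{w}^\epsilon := w^\epsilon - w^\epsilon(x_0)$, one obtains a sequence with uniformly bounded gradient, on which the operator is uniformly elliptic, so standard Schauder estimates give uniform $C^{2,\alpha}$ bounds. Extracting a convergent subsequence produces a smooth $w$ solving \eqref{eq:neu_el_pro} with the eigenvalue $\lambda$.

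For uniqueness, given two pairs $(\lambda_1, w_1)$ and $(\lambda_2, w_2)$, the difference $w_1 - w_2$ satisfies a linear equation obtained by interpolating the mean-curvature-type operator along the segment between $Dw_1$ and $Dw_2$. The strong maximum principle in $\Omega$ together with the Hopf boundary lemma applied at an extremum of $w_1 - w_2$ on $\overline{\Omega}$ (where the Neumann condition forces the normal derivative to vanish) forces $\lambda_1 = \lambda_2$ and then $w_1 - w_2 \equiv \text{const}$, exactly as in \cite{AW1} and in the proof of Theorem \ref{thm:3.3}. The main obstacle here is precisely the verification that the gradient estimate of Theorem \ref{thm:4.1} survives the introduction of the $\epsilon w^\epsilon$ term; as indicated above, this ultimately reduces to a favorable sign for one explicit term and so presents no genuine new difficulty.
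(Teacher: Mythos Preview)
Your proposal is correct and follows essentially the same approach as the paper: the same regularized eigenvalue problem, the same auxiliary function $\Psi=\log|DW|^2+a_0h$ with $W=w^\epsilon+\varphi h$, the same identification of the single extra term $2\epsilon W_{x_k}w^\epsilon_{x_k}$ and its favorable sign for large $|Dw^\epsilon|$, and the same passage to the limit with uniqueness via the maximum principle and the Hopf lemma. The only cosmetic difference is that the paper's barrier step cites \cite{MWW} rather than invoking smallness of $\varphi$; otherwise your outline matches the paper's proof.
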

\begin{proof}
    Similar to the proof in Theorem \ref{thm:3.3}, we firstly consider the following eigenvalue problem: 
\begin{equation} \label{eq:neu_el_eigpro}
    \left\{ \begin{array}{cllcl}
           \epsilon w^{\epsilon}  & = & \sqrt{1+|Dw^{\epsilon}|^2} \  D^2_{p_i p_j} F (Dw^{\epsilon},-1) w^{\epsilon}_{x_i x_j}    \quad &  \text{in} &  \ \Omega , \\
            D_N w^{\epsilon} & = & \varphi  \quad  & \text{on}  & \ \partial \Omega .
    \end{array}  \right.
\end{equation}
As in \cite{MWW}, we can obtain the existence of the solution to \eqref{eq:neu_el_eigpro} and a uniform bound to $| \epsilon w^{\epsilon}|$. Then we also apply the same technique as in Theorem \ref{thm:4.1} to the gradient estimate of the solution to \eqref{eq:neu_el_eigpro}. 

Define the same auxiliary function $\Psi(x) = \log{|DW|^2} + a_0 h$, where
$W = w^{\epsilon} + \varphi h$. Then we consider two cases for the maximum point $x_0$ of $\Psi(x)$. The first one is that $x_0$ is on the boundary $\partial \Omega$, for which we can follow the proof in Theorem \ref{thm:4.1}, case 1. The second one is that $x_0$ is in the interior of $\Omega$. Most steps also follow as in Theorem \ref{thm:4.1}, case 2.

Instead of \eqref{eq:neuineq1} in Theorem \ref{thm:4.1}, we only make use of 
\begin{equation} \label{neu_el_max}
    0 \geq a^{ij} \Psi_{x_i x_j} = 
    a^{ij} \frac{\left(|DW|^2\right)_{x_i x_j}}{|DW|^2}  - a_0^2 a^{ij} h_{x_i} h_{x_j} + a_0 a^{ij} h_{x_i x_j} .
\end{equation}
By differentiating the governing function in \eqref{eq:neu_el_eigpro}, we have
\begin{equation}
    a^{ij} \left(|DW|^2 \right)_{x_i x_j} = 2 \left(J_1 + J_2 +J_3 + 2 \epsilon W_{x_k} w^{\epsilon}_{x_k} \right),
\end{equation}
where $J_i's$ are identical to \eqref{eq:neu_Ji} by substituting $u$ with $w^{\epsilon}$. And we have
\begin{equation*}
    W_{x_k} w^{\epsilon}_{x_k} = \left(w^{\epsilon}_{x_k} + (\varphi h)_{x_k} \right) w^{\epsilon}_{x_k} \geq 0,
\end{equation*}
provided $|Dw^{\epsilon}|$ is sufficiently large. Hence we also obtain
\begin{equation*}
    a^{ij} \left(|DW|^2 \right)_{x_i x_j} \geq -C |Dw^{\epsilon}|.
\end{equation*}
Then the gradient estimate for $w^{\epsilon}$ follows by the same approach as in the proof of Theorem \ref{thm:4.1}. 

Similar to Theorem \ref{thm:3.3}, we have $\epsilon w^{\epsilon} \rightarrow \lambda$ and $\lambda$ is unique.
\end{proof}

Finally we state the asymptotic result of the solution to \eqref{eq:Neupro}, which can be proved by the same approach as in the contact angle boundary problem. We omit the proof here.

\begin{corollary}
    For any solution $u(x,t)$ to \eqref{eq:Neupro}, there exists a positive constant $C$ independent of time, such that
    \begin{equation*}
        |u(x,t)-\lambda t| \leq C.
    \end{equation*}
\end{corollary}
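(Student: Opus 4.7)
The plan is to deduce the bound $|u(x,t)-\lambda t|\leq C$ from a comparison argument with the translating solution $\tilde w(x,t):=w(x)+\lambda t$ constructed via the preceding elliptic theorem. First observe that $\tilde w$ is itself a smooth solution of the parabolic problem \eqref{eq:Neupro}: the anisotropic operator on the right of \eqref{eq:Neupro} is independent of $u$ itself (only $Du$ and $D^2u$ appear), and $D_N\tilde w=D_Nw=\varphi$ on $\partial\Omega$, so $\partial_t\tilde w=\lambda$ exactly matches the governing PDE applied to $w$. Moreover, for every constant $c\in\mathbb R$, the shifted function $\tilde w(x,t)+c$ is again a solution of \eqref{eq:Neupro} with the same boundary data.

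Since $u_0, w\in C(\overline\Omega)$, we can pick constants $c_1\leq c_2$ with
\[
w(x)+c_1\;\leq\;u_0(x)\;\leq\;w(x)+c_2\qquad\text{on }\overline\Omega.
\]
Set $\underline u(x,t):=\tilde w(x,t)+c_1$ and $\overline u(x,t):=\tilde w(x,t)+c_2$. Both are smooth solutions of \eqref{eq:Neupro} ordered with respect to $u(\cdot,0)=u_0$ at $t=0$, and they satisfy the identical Neumann boundary condition $D_N\underline u=D_N\overline u=\varphi=D_Nu$. The difference $z(x,t):=u(x,t)-\underline u(x,t)$ (and similarly $\overline u-u$) thus satisfies $D_Nz=0$ on $\Gamma_T$ and $z(\cdot,0)\geq 0$ in $\Omega$.

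The next step is to show by the maximum principle that $z\geq 0$ on $Q_T$. Writing $z_t-\partial_tu+\partial_t\underline u=0$ and using the mean value theorem on the nonlinear operator $\mathcal N(Du,D^2u)=\sqrt{1+|Du|^2}\,D^2_{p_ip_j}F(Du,-1)u_{x_ix_j}$, the difference $z$ satisfies a linear parabolic equation
\[
z_t = \tilde a^{ij}(x,t)\,z_{x_ix_j}+\tilde b^{k}(x,t)\,z_{x_k}
\]
whose coefficients depend on integrated values of the $Du$ and $D^2u$ of $u$ and $\underline u$ along the segment joining them. By Theorem \ref{thm:4.1} (applied to $u$) and the gradient bound for $w$ from the elliptic theorem, all such gradients are uniformly bounded independently of $t$, so $\tilde a^{ij}$ is uniformly elliptic and $\tilde b^k$ is bounded on $\overline{Q_T}$. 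Combined with the oblique/Neumann boundary condition $D_Nz=0$ on $\Gamma_T$, the standard Hopf-type maximum principle for linear parabolic operators forces $z\geq 0$ throughout $Q_T$. The same argument applied to $\overline u-u$ yields $u\leq\overline u$.

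Combining the two inequalities gives $w(x)+c_1+\lambda t\leq u(x,t)\leq w(x)+c_2+\lambda t$, so
\[
|u(x,t)-\lambda t|\;\leq\;\|w\|_{L^\infty(\Omega)}+\max(|c_1|,|c_2|)\;=:\;C,
\]
with $C$ independent of $t$. The main technical point is the last step: verifying the comparison principle for our quasilinear problem with Neumann boundary data. This is where the uniform gradient bound of Theorem \ref{thm:4.1} is crucial, as it linearizes the problem with bounded coefficients and preserves uniform parabolicity up to the boundary; the detailed verification, including the boundary Hopf lemma for the oblique condition, proceeds exactly as in \cite{AW2}, \cite{MWW} and our parallel work \cite{CY1}, which is why the authors omit it.
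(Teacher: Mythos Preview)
Your argument is correct and is precisely the standard comparison approach that the paper defers to (the paper omits the proof entirely, referring to \cite{AW2}, \cite{MWW}, \cite{GMWW} and \cite{CY1}). One minor remark: to get bounded $\tilde b^k$ in the linearized equation you also need a uniform bound on $D^2u$ and $D^2w$, which follows from the gradient estimate via standard Schauder theory; alternatively, one can bypass this by arguing directly at an extremum of $z$ (where $Dz=0$ forces $Du=D\underline u$, so no first-order coefficient appears).
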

\noindent
\begin{theorem}
Let $u$ be any solution of \eqref{eq:Neupro}. Then
    $u(x,t)-(w(x)+\lambda t)$ converges to a constant as $t\longrightarrow+\infty$, where $w$ is a solution of \eqref{eq:neu_el_pro}.
\end{theorem}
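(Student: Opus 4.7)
The plan is to set $v(x,t) := u(x,t) - w(x) - \lambda t$ and to show that $v(\cdot,t)$ converges uniformly on $\overline{\Omega}$ to a constant as $t\to\infty$. By the preceding corollary, $\sup_{\overline{\Omega}\times[0,\infty)} |v| \leq C$, and since $D_N u = \varphi = D_N w$, the boundary condition reduces to the homogeneous Neumann condition $D_N v = 0$ on $\partial \Omega \times [0,\infty)$.

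The first step is to linearize the difference between the equations for $u$ and for $w$. Writing the operator as $\mathcal{F}(p,M) := \sqrt{1+|p|^2}\, D^2_{p_i p_j} F(p,-1)\, M_{ij}$ and applying the fundamental theorem of calculus along the segment joining $(Dw, D^2w)$ to $(Du, D^2u)$, one obtains
\[
v_t = \tilde{a}^{ij}(x,t)\, v_{x_i x_j} + \tilde{b}^i(x,t)\, v_{x_i} \quad \text{in } \Omega \times (0,\infty),
\]
where the coefficients are bounded: Theorem \ref{thm:4.1} provides a uniform bound on $|Du|$ and the corresponding elliptic theorem provides one on $|Dw|$, so the degeneracy direction of $F$ stays uniformly bounded away from infinity, forcing $\tilde{a}^{ij}$ to be uniformly elliptic. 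Standard $C^{2,\alpha}$ parabolic Schauder estimates (available since the equation becomes uniformly parabolic after the gradient bound) supply bounds on $D^2u, D^2w$, controlling $\tilde{b}^i$.

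Applying the strong maximum principle and Hopf lemma to $\pm v$, the functions $M(t) := \max_{\overline{\Omega}} v(\cdot,t)$ and $m(t) := \min_{\overline{\Omega}} v(\cdot,t)$ are respectively nonincreasing and nondecreasing in $t$, and both are bounded, hence have finite limits. The main obstacle, as I expect it, is to show that the oscillation $\omega(t) := M(t) - m(t)$ decays to $0$. One route is a quantitative parabolic weak Harnack / oscillation inequality for uniformly parabolic equations with homogeneous Neumann data on a $C^{2,\alpha}$ convex domain: on unit-length time windows one obtains a fixed decay factor $\delta = \delta(\Omega, \|\tilde{a}^{ij}\|, \|\tilde{b}^i\|) \in (0,1)$ with $\omega(t+1) \leq \delta\, \omega(t)$, giving exponential decay of $\omega$ to $0$.

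An alternative (and the route taken in \cite{AW2}, \cite{GMWW}, \cite{MWW}, and our parallel work \cite{CY1}) is a compactness argument: for any sequence $t_n \to \infty$, the bounds above give, via parabolic $C^{2,\alpha}$ estimates, a subsequential limit $v_\infty$ of $v(\cdot, \cdot + t_n)$ on $\overline{\Omega}\times\mathbb{R}$ satisfying a limiting uniformly parabolic linear equation with $D_N v_\infty = 0$. The monotonicity of $M$ and $m$ forces $\max v_\infty(\cdot,t)$ and $\min v_\infty(\cdot,t)$ to be independent of $t$; the strong maximum principle (interior) and Hopf lemma (on $\partial\Omega$, using the Neumann condition) then force $v_\infty$ to be spatially constant, which forces the limits of $M(t)$ and $m(t)$ to agree. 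Combining $\omega(t) \to 0$ with the monotone convergence of $M(t)$ to some constant $c$ yields $v(\cdot,t) \to c$ uniformly on $\overline{\Omega}$, completing the proof.
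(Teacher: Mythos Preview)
Your proposal is correct and follows essentially the same approach as the paper, which omits the proof entirely and defers to the identical argument in \cite{AW2}, \cite{GMWW}, \cite{MWW}, and \cite{CY1}; your linearization of the difference $v=u-w-\lambda t$ into a uniformly parabolic equation with homogeneous Neumann data, followed by monotonicity of the oscillation via the strong maximum principle/Hopf lemma and a compactness argument, is precisely the standard route those references take.
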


\bibliographystyle{plain}
\bibliography{references}

\end{document}